\newtheorem{thm}{Theorem}[section]
\newtheorem{lem}[thm]{Lemma}
\newtheorem{prop}[thm]{Proposition}
\theoremstyle{definition}
\newtheorem{defn}[thm]{Definition}
\newtheorem{assumption}[thm]{Assumption}
\newtheorem{rem}[thm]{Remark}
\newcommand\cM{\mathcal{M}}
\DeclareMathOperator*{\esssup}{ess\,sup}
\newcommand{\supp}{\mathrm{supp}}
\newcommand{\aint}{-\hspace{-0.40cm}\int}
\def\XXint#1#2#3{{\setbox0=\hbox{$#1{#2#3}{\int}$ }
\vcenter{\hbox{$#2#3$ }}\kern-.58\wd0}}
\def\@tocline#1#2#3#4#5#6#7{\relax
  \ifnum #1>\c@tocdepth 
  \else
    \par \addpenalty\@secpenalty\addvspace{#2}%
    \begingroup \hyphenpenalty\@M
    \@ifempty{#4}{%
      \@tempdima\csname r@tocindent\number#1\endcsname\relax
    }{%
      \@tempdima#4\relax
    }%
    \parindent\z@ \leftskip#3\relax \advance\leftskip\@tempdima\relax
    \rightskip\@pnumwidth plus4em \parfillskip-\@pnumwidth
    #5\leavevmode\hskip-\@tempdima
      \ifcase #1
       \or\or \hskip 1em \or \hskip 2em \else \hskip 3em \fi%
      #6\nobreak\relax
    \dotfill\hbox to\@pnumwidth{\@tocpagenum{#7}}\par
    \nobreak
    \endgroup
  \fi}
\begin{document}

\title[A regularity theory for second-order parabolic partial differential equations in weighted mixed-norm Sobolev-Zygmund spaces]{A regularity theory for second-order parabolic partial differential equations in weighted mixed-norm Sobolev-Zygmund spaces}

\author{Jae-Hwan Choi}
\address{School of Mathematics, Korea Institute for Advanced Study, 85 Hoegi-ro, Dongdaemun-gu, Seoul, 02455, Republic of Korea}
\email{jhchoi@kias.re.kr}
\thanks{J.-H. Choi was supported by a KIAS Individual Grant (MG102701) at Korea Institute for Advanced Study.}

\author{Junhee Ryu}
\address{School of Mathematics, Korea Institute for Advanced Study, 85 Hoegi-ro, Dongdaemun-gu, Seoul, 02455, Republic of Korea} \email{junhryu@kias.re.kr}
\thanks{J. Ryu was supported by a KIAS Individual Grant (MG101501) at Korea Institute for Advanced Study and by the National Research Foundation of Korea (NRF) grant funded by the Korea government (Ministry of Science and ICT) (No. RS-2026-25477288).}

\subjclass[2020]{35K15, 35B65, 26A16, 35D35}

\keywords{Parabolic equations with variable coefficients, Mixed-norm Sobolev-Zygmund estimates, Partial Schauder theory}

\begin{abstract}
We develop an optimal regularity theory for parabolic partial differential equations in weighted mixed-norm Sobolev-Zygmund spaces.
The results extend the classical Schauder estimates to coefficients that are merely measurable in time and to the critical case of integer-order regularity.
In addition, non-zero initial data are treated in the optimal trace space via a sharp trace theorem.
\end{abstract}

\maketitle

\section{Introduction and Main results} 
\subsection{Goal and setting}
In this paper, we study the existence, uniqueness, and regularity of solutions to the second-order parabolic partial differential equation (PDE)
\begin{equation}
\label{25.12.06.15.30}
    \partial_t u(t,x)
    = \sum_{i,j=1}^{d} a_{ij}(t,x) D_{x^i x^j} u(t,x)
      + \sum_{i=1}^{d} b_i(t,x) D_{x^i} u(t,x)
      + c(t,x) u(t,x) + f(t,x),
\end{equation}
posed on $(0,T)\times\mathbb{R}^d$, with non-zero initial data.  
Our framework is based on the weighted mixed-norm Sobolev-Zygmund space $L_p((0,T),w\,\mathrm{d}t;\Lambda^{\gamma}(\mathbb{R}^d))$, where $\Lambda^\gamma(\mathbb{R}^d)$ is the H\"older-Zygmund space of order $\gamma\in(0,\infty)$ and the weight $w$ belongs to the Muckenhoupt class $A_p(\mathbb{R})$.
For $p\in(1,\infty]$, the mixed-norm is given by
$$
    \|u\|_{L_p((0,T),w\,\mathrm{d}t;\Lambda^{\gamma}(\mathbb{R}^d))}
    :=
    \begin{cases}
        \left(\displaystyle\int_0^T \|u(t,\cdot)\|_{\Lambda^{\gamma}(\mathbb{R}^d)}^p
        w(t)\,\mathrm{d}t\right)^{1/p}, & p\in(1,\infty),\\[2mm]
        \displaystyle \esssup_{t\in(0,T)} \|u(t,\cdot)\|_{\Lambda^{\gamma}(\mathbb{R}^d)}, & p=\infty.
    \end{cases}
$$

We begin by recalling the definition of H\"older-Zygmund spaces and Muckenhoupt's weight class.
\begin{defn}[H\"older-Zygmund space]
\label{25.10.12.23.39}
Let $\gamma\in(0,\infty)$ and $p\in[1,\infty]$.
    The space $\Lambda_p^{\gamma}(\mathbb{R}^d)$ is defined by
    $$
    \Lambda_p^{\gamma}(\mathbb{R}^d):=\{f\in L_{\infty}(\mathbb{R}^d):\|f\|_{\Lambda_p^{\gamma}(\mathbb{R}^d)}<\infty\}.
    $$
    Here the norm $\|f\|_{\Lambda_p^{\gamma}(\mathbb{R}^d)}$ is given by
    \begin{eqnarray*}
    \|f\|_{\Lambda_p^{\gamma}(\mathbb{R}^d)}:=\|f\|_{L_{\infty}(\mathbb{R}^d)}+\|f\|_{\dot{\Lambda}_p^{\gamma}(\mathbb{R}^d)},
    \end{eqnarray*}
    where
    $$
    \|f\|_{\dot{\Lambda}_p^{\gamma}(\mathbb{R}^d)}:=\begin{cases}
        \left(\int_{\mathbb{R}^d}\|\mathcal{D}_{h}^{[\gamma]^{-}}f\|_{L_{\infty}(\mathbb{R}^d)}^p\frac{\mathrm{d}h}{|h|^{d+p\gamma}}\right)^{1/p},\quad &p\in[1,\infty),\\
        \sup\limits_{h\in\mathbb{R}^d}\frac{\|\mathcal{D}_{h}^{[\gamma]^{-}}f\|_{L_{\infty}(\mathbb{R}^d)}}{|h|^{\gamma}},\quad &p=\infty.
        \end{cases}
    $$
Here $\mathcal{D}_hf(x):=f(x+h)-f(x)$, $\mathcal{D}_h^nf(x)=\mathcal{D}_h^{n-1}(\mathcal{D}_hf)(x)$, and $[\gamma]^{-}$ is the smallest integer strictly larger than $\gamma$.
We let $\Lambda^{\gamma}(\mathbb{R}^d):=\Lambda_{\infty}^{\gamma}(\mathbb{R}^d)$ and $\dot{\Lambda}^{\gamma}(\mathbb{R}^d):=\dot{\Lambda}_{\infty}^{\gamma}(\mathbb{R}^d)$.
\end{defn}

\begin{defn}[Muckenhoupt's class]
For $p \in (1,\infty)$, let $A_p = A_p(\mathbb{R})$ be the class of all 
nonnegative and locally integrable functions $w:\mathbb{R}\to [0,\infty)$ satisfying
\begin{equation*}
    [w]_{A_p} := \sup_{r>0, t\in\mathbb{R}} \left(\aint_{t-r}^{t+r} w(s)\mathrm{d}s \right)\times\left(\aint_{t-r}^{t+r} w(s)^{-\frac{1}{p-1}} \mathrm{d}s \right)^{p-1} < \infty.
\end{equation*}
\end{defn}

\subsection{Motivation and background}
The regularity theory for linear parabolic equations of the form
\eqref{25.12.06.15.30} has been extensively studied.  
A classical cornerstone is the \emph{Schauder theory}
which roughly states that
$$
    a_{ij},b_i,c,f \in C^{\alpha/2,\alpha}_{t,x}, \quad \alpha\in(0,1)
    \;\Longrightarrow\;
    u \in C^{1+\alpha/2,\,2+\alpha}_{t,x}.
$$
However, the classical Schauder theory faces two fundamental limitations:
\begin{enumerate}[1.]
    \item it does not apply when the coefficients $a_{ij},b_i$, and $c$ are merely
    \emph{measurable in time};
    \item it breaks down at the \emph{critical index} $\alpha=1$.
\end{enumerate}

To overcome the first limitation, Brandt \cite{brandt1969interior}
introduced the \emph{partial Schauder theory}, in which regularity is measured in mixed-norm spaces $L_\infty(\mathbb{R},\mathrm{d}t; C^\alpha(\mathbb{R}^d))$, thereby accommodating time-measurable coefficients while preserving spatial H\"older continuity. Lorenzi \cite{Lor00} and Krylov \cite{krylov2002calderon,krylov2003parabolic} subsequently generalized Brandt’s approach to the mixed-norm spaces
$L_p(\mathbb{R},\mathrm{d}t; C^\alpha(\mathbb{R}^d))$ for $p\in(1,\infty]$.
Following their work, several further extensions have been developed.
Boccia and Krylov \cite{MR3287783} extended the theory to higher-order parabolic systems.
Stinga and Torrea \cite{stinga2021holder} proved weighted Sobolev-Schauder estimates for second-order parabolic equations in the mixed-norm setting 
$$
L_p(\mathbb{R}, w(t)\mathrm{d}t; C^\alpha(\mathbb{R}^d)),\quad w\in A_p(\mathbb{R}).
$$
Despite these advances, classical H\"older spaces remain inadequate in the critical case of \emph{integer} regularity $\alpha=1$, where the Schauder estimate is known to fail; see, for instance, \cite[Chapter 2.2]{MR4560756}.

A natural way to address the second limitation is to work within the H\"older-Zygmund spaces $\Lambda^\gamma$.
These spaces coincide with H\"older spaces for non-integer orders, $\Lambda^\gamma = C^\gamma$ for $\gamma\notin\mathbb{N}$, but become strictly larger for integer orders (see Proposition \ref{25.12.06.17.46}):
$$
    C^\gamma \subsetneq \Lambda^\gamma, \qquad \gamma\in\mathbb{N}.
$$
Within this framework, Kim \cite{kim2018lp} established regularity estimates in $L_p((0,T),\mathrm{d}t;\Lambda^\alpha(\mathbb{R}^d))$, but only for equations with zero initial data.
This assumption is likewise adopted in the results of Krylov \cite{krylov2002calderon,krylov2003parabolic},
Boccia-Krylov \cite{MR3287783}, and Stinga-Torrea \cite{stinga2021holder}.
Recently, the first author of the present paper \cite{C24} resolved this limitation by developing a regularity theory in the weighted mixed-norm spaces
\begin{equation}
\label{25.12.11.14.01}
    L_p((0,T), w(t)\mathrm{d}t; \Lambda^\gamma(\mathbb{R}^d)),
    \qquad w\in A_p(\mathbb{R}),\; p\in(1,\infty],
\end{equation}
allowing for non-zero initial data through the use of the optimal trace theorem.

While the aforementioned results focus on time-dependent coefficients, the corresponding theory for space-time-dependent coefficients within the Zygmund framework has not been fully established.
The extension to variable coefficients is motivated not only by theoretical completeness but also by its relevance to nonlinear parabolic PDEs.
A pertinent example is the two-dimensional incompressible Navier-Stokes equation in the vorticity form
\begin{equation}
\label{25.12.15.15.47}
    \partial_t\omega + u\cdot\nabla_x\omega = \Delta_x\omega,
    \qquad
    u = \nabla_x^{\perp}\Delta_x^{-1}\omega.
\end{equation}
It is well-known that for initial vorticity $\omega_0\in L_1(\mathbb{R}^2)\cap L_{\infty}(\mathbb{R}^2)$, there exists a unique global weak solution satisfying $\omega\in L_{\infty}((0,T);L_1(\mathbb{R}^2)\cap L_{\infty}(\mathbb{R}^2))$,
as established by Ben-Artzi and Brezis \cite{MR1308857,MR1308858}.
Moreover, Ben-Artzi \cite{MR1308857} also proved
$$
\partial_tD^{k}_x\omega(t,\cdot),\,D^k_x\omega(t,\cdot)\in L_{\infty}(\mathbb{R}^2), \quad \forall k\in\mathbb{N},t>0.
$$
We note that this regularization is recovered via the partial Schauder theory.
Treating the velocity field $u$ as a transport coefficient that is bounded in time yet regular in space motivates the study of linear theories in such mixed-norm classes. 
Such a perspective provides a useful analytical tool for linearized models arising from nonlinear PDEs.

Recently, Wei et al. \cite{wei2024new} studied parabolic PDEs with coefficients depending on both time and space, and obtained regularity estimates for solutions in Lebesgue-H\"older-Dini spaces.
Their results, however, are derived under zero initial data and unweighted time norms, and do not extend to Zygmund regularity, weighted mixed-norm settings, or the delicate \emph{integer}-order case.
Consequently, despite the well-developed theory for time-dependent coefficients, the Zygmund theory for fully variable coefficients $a_{ij}(t,x)$ remains largely open.
Motivated by this gap, we study equation \eqref{25.12.06.15.30} in the weighted mixed-norm space \eqref{25.12.11.14.01}, thereby extending the existing partial Schauder and Zygmund regularity theories to parabolic PDEs with variable coefficients, including the integer-order regime $\gamma\in\mathbb{N}$.

We also refer to several works that investigate related regularity questions in different settings, such as those based on Zygmund spaces or the partial Schauder theory; see \textit{e.g.}, \cite{MR2764915,MR3938318,MR3910092,MR4289894,MR4991198}.
Lastly, we expect that the present approach may be useful in developing partial Schauder regularity theory for other linear and nonlinear equations under suitable structural assumptions, including the models studied in
\cite{GLX25,HJ26,HLZ26,MZ24,WYY2026}.

\subsection{Main results}

We begin by introducing the weighted mixed-norm Sobolev-Zygmund spaces that serve as the natural setting for our solutions.  
These spaces combine spatial Zygmund regularity with time-dependent $L_p$ integrability under the Muckenhoupt weights, which is essential for treating parabolic PDEs
whose coefficients are merely measurable in time.
\begin{defn}[Solution space]
\label{26.04.14.13.37}
Let $\gamma\in(0,\infty)$, $p\in(1,\infty]$, and $w\in A_p(\mathbb{R})$.
We put $w\equiv1$ when $p=\infty$.
\begin{enumerate}[(i)]
    \item For $0<S<T<\infty$, we define
\begin{equation*}
    \mathbf{\Lambda}_{p,w}^{\gamma}(S,T) := \begin{cases}
    L_p((S,T), w\, \mathrm{d}t; \Lambda^{\gamma}(\mathbb{R}^d)),\quad &\text{if } p\in(1,\infty),\\
    L_{\infty}((S,T), \mathrm{d}t; \Lambda^{\gamma}(\mathbb{R}^d)),\quad &\text{if }p=\infty,
    \end{cases}
\end{equation*}
equipped with the norm defined by
\begin{equation*}
    \|u\|_{\mathbf{\Lambda}_{p,w}^{\gamma}(S,T)}:= 
    \left(\int_S^T \|u(t,\cdot)\|_{\Lambda^{\gamma}(\mathbb{R}^d)}^p \,w(t) \mathrm{d}t \right)^{1/p}1_{p\in(1,\infty)}+\esssup_{t\in(S,T)}\|u(t,\cdot)\|_{\Lambda^{\gamma}(\mathbb{R}^d)}1_{p=\infty}. 
\end{equation*}
When $S=0$, we simply define $\mathbf{\Lambda}_{p,w}^{\gamma}(T):=\mathbf{\Lambda}_{p,w}^{\gamma}(0,T)$.

    \item We define $\Lambda_{p}^{\gamma,w}(\mathbb{R}^d)$ to be the set of all continuous functions $u$ defined on $\mathbb{R}^d$ satisfying
$$
\|u\|_{\Lambda_{p}^{\gamma,w}(\mathbb{R}^d)}:=\|u\|_{L_{\infty}(\mathbb{R}^d)}+\|u\|_{\dot{\Lambda}_{p}^{\gamma,w}(\mathbb{R}^d)}<\infty,
$$
where
$$
\|u\|_{\dot{\Lambda}_{p}^{\gamma,w}(\mathbb{R}^d)}:=\left(\int_{\mathbb{R}^d}\frac{\|\mathcal{D}_h^{[\gamma]^{-}}u\|_{L_{\infty}(\mathbb{R}^d)}^p}{W(|h|^{2})^{-1}}\frac{\mathrm{d}h}{|h|^{d+p\gamma}}\right)^{1/p}1_{p\in(1,\infty)}+\|u\|_{\dot{\Lambda}^{\gamma}(\mathbb{R}^d)}1_{p=\infty},
$$
and $W(\lambda):=\int_0^{\lambda}w(s)\mathrm{d}s$.

    \item The space $\mathbf{H}_{p,w}^{\gamma+2}(S,T)$ is the set of all $u\in \mathbf{\Lambda}_{p,w}^{\gamma+2}(S,T)$ for which there exist $u_0\in \Lambda_{p}^{\gamma+2,w}(\mathbb{R}^d)$ and $f\in \mathbf{\Lambda}_{p,w}^{\gamma}(S,T)$ such that
    $$
    u(t,x)=u_0(x)+\int_S^tf(s,x)\mathrm{d}s
    $$
    for all $(t,x)\in [S,T)\times\mathbb{R}^d$.
    We also define
    $$
    \|u\|_{\mathbf{H}_{p,w}^{\gamma+2}(S,T)}:=\|u\|_{\mathbf{\Lambda}_{p,w}^{\gamma+2}(S,T)}+\|u_0\|_{\Lambda_{p}^{\gamma+2,w}(\mathbb{R}^d)}+\|f\|_{\mathbf{\Lambda}_{p,w}^{\gamma}(S,T)}.
    $$
\end{enumerate}
\end{defn}

\begin{rem}
The space $\Lambda^{\gamma+2,w}_p(\mathbb{R}^d)$ in Definition \ref{26.04.14.13.37}-(ii) is introduced precisely so as to coincide with the natural \emph{initial trace space} associated with the weighted parabolic class.
In other words, the operator $\mathrm{T}$ defined by 
$$
\mathrm{T}:u(t,x)\mapsto u(0,x)
$$ 
is the trace operator from $\mathbf{H}_{p,w}^{\gamma+2}(0,T)$ to $\Lambda^{\gamma+2,w}_p(\mathbb{R}^d)$. 
In Theorem \ref{25.11.01.13.31}, we will show that it is a bounded linear operator and admits a bounded right inverse.
Since the trace operator $\mathrm{T}$ evaluates the function at $t=0$, the relevant time-weight information is accumulated from the initial time. 
This is why the function parameter is given by the primitive of $w$ anchored at $0$, rather than by a translated quantity such as $\int_t^{t+\lambda} w(s)\,\mathrm{d}s$. 
Moreover, the factor $W(|h|^2)$ is consistent with the parabolic scaling $t\sim |h|^2$.

As a model example, if $w(t)=t^\alpha$ near $t=0$ with $\alpha>-1$, then
$$
W(\lambda)=\frac{1}{\alpha+1}\lambda^{\alpha+1},
\qquad
W(|h|^2)\sim |h|^{2(\alpha+1)},
\qquad
\Lambda_{p}^{\gamma,w}(\mathbb{R}^d)=B_{\infty,p}^{\gamma-\frac{2(1+\alpha)}{p}}(\mathbb{R}^d),
$$
(see \textit{e.g.} \cite[Theorem 1.1]{MV14} or \cite[Remark 1.8]{CLSW25}).
Hence, the trace norm reflects precisely how the weighted time-regularity accumulates near the initial boundary.
\end{rem}

\begin{rem}
It can be verified directly that the pair $(u_0, f) \in \Lambda_{p}^{\gamma+2,w}(\mathbb{R}^d) \times \mathbf{\Lambda}_{p,w}^{\gamma}(S,T)$ associated with $u$ is unique.  
Hence, we may unambiguously write $f = \partial_t u$.
\end{rem}

We now state the assumptions on the coefficients and present the main result.
The conditions below ensure uniform ellipticity of the operator and spatial $\Lambda^\gamma$–regularity of the coefficients.
\begin{assumption}[$\gamma$]
\label{25.10.12.23.59}
There exist positive constants $\nu$ and $K$ satisfying the following conditions:
\begin{enumerate}
    \item For all $(t,x)\in(0,\infty)\times\mathbb{R}^d$,
\begin{equation}
\label{25.10.20.19.33}
    \nu|\xi|^2\leq \sum_{i,j=1}^{d}a_{ij}(t,x)\xi_i\xi_j,\quad \forall \xi\in\mathbb{R}^d.
\end{equation}
    \item The coefficients $a_{ij}$, $b_i$, and $c$ are measurable, and satisfy 
\begin{equation*}
\sum_{i,j=1}^{d}\|a_{ij}\|_{L_{\infty}((0,\infty);\Lambda^{\gamma}(\mathbb{R}^d))}+\sum_{i=1}^{d}\|b_{i}\|_{L_{\infty}((0,\infty);\Lambda^{\gamma}(\mathbb{R}^d))}+\|c\|_{L_{\infty}((0,\infty);\Lambda^{\gamma}(\mathbb{R}^d))}\leq K.
\end{equation*}
\end{enumerate}
\end{assumption}

\begin{thm}
\label{25.10.20.19.28}
Let $T \in (0,\infty)$, $p \in (1,\infty]$, $\gamma > 0$, and let $w \in A_p(\mathbb{R})$ with $[w]_{A_p}\leq K_0$ (put $w\equiv1$ and $K_0=1$ when $p=\infty$).
Suppose that Assumption \ref{25.10.12.23.59} $(\gamma)$ holds.
Then, for any pair $(u_0, f) \in \Lambda_{p}^{\gamma+2,w}(\mathbb{R}^d) \times \mathbf{\Lambda}_{p,w}^{\gamma}(T)$, there exists a unique solution 
$u \in \mathbf{H}_{p,w}^{\gamma+2}(T)$ to \eqref{25.12.06.15.30} in $(0,T) \times \mathbb{R}^d$, with the initial condition $u(0,x) = u_0(x)$.
Moreover, the following estimate holds:
\begin{equation}
\label{25.10.30.12.00}
    \|u\|_{\mathbf{H}_{p,w}^{\gamma+2}(T)}
    \leq N \big(
        \|u_0\|_{\Lambda_{p}^{\gamma+2,w}(\mathbb{R}^d)}
        + \|f\|_{\mathbf{\Lambda}_{p,w}^{\gamma}(T)}
    \big),
\end{equation}
where $N = N(d,\gamma,K_0,K,\nu,T,p)$.
\end{thm}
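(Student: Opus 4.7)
The plan is to establish Theorem \ref{25.10.20.19.28} via the method of continuity anchored on a closed a priori estimate, bootstrapping from the theory of \cite{C24} for coefficients depending only on $t$. As a preliminary reduction, I would absorb the initial datum using the sharp trace theorem already developed in \cite{C24}: given $u_0\in\Lambda_p^{\gamma+2,w}(\mathbb{R}^d)$, construct an auxiliary function $v\in\mathbf{H}_{p,w}^{\gamma+2}(T)$ with $v(0,\cdot)=u_0$ and $\|v\|_{\mathbf{H}_{p,w}^{\gamma+2}(T)}\leq N\|u_0\|_{\Lambda_p^{\gamma+2,w}(\mathbb{R}^d)}$ (for example by applying the heat semigroup to $u_0$). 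Setting $\tilde u:=u-v$ produces an equation of the same form with zero initial data and right-hand side in $\mathbf{\Lambda}_{p,w}^{\gamma}(T)$; it thus suffices to treat $u_0\equiv 0$.

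For the a priori estimate I would freeze the spatial variable. The result of \cite{C24} applied to the frozen equation
$$
\partial_t u=\sum_{i,j}a_{ij}(t,x_0)D_{x^ix^j}u+F
$$
with time-measurable, spatially constant $a_{ij}(t,x_0)$ gives $\|u\|_{\mathbf{\Lambda}_{p,w}^{\gamma+2}(T)}\leq N_0\|F\|_{\mathbf{\Lambda}_{p,w}^{\gamma}(T)}$, and in our setting
$$
F=f+\sum_{i,j}\bigl(a_{ij}(t,x)-a_{ij}(t,x_0)\bigr)D_{x^ix^j}u+\sum_i b_iD_{x^i}u+cu.
$$
Localizing by a spatial cutoff $\zeta_r$ of scale $r$ around $x_0$, applying this bound to $\zeta_r u$, controlling the commutator $[\zeta_r,L]u$ (which involves at most first derivatives of $u$ and is therefore of lower order), and using the Zygmund product estimate $\|av\|_{\Lambda^{\gamma}}\leq N(\|a\|_{\Lambda^{\gamma}}\|v\|_{L_\infty}+\|a\|_{L_\infty}\|v\|_{\Lambda^{\gamma}})$ together with the smallness of $\zeta_r(a_{ij}(t,\cdot)-a_{ij}(t,x_0))$ in $\Lambda^{\gamma}(\mathbb{R}^d)$ for small $r$, a partition-of-unity assembly yields, for $r$ chosen sufficiently small,
$$
\|u\|_{\mathbf{\Lambda}_{p,w}^{\gamma+2}(T)}\leq \tfrac12\|u\|_{\mathbf{\Lambda}_{p,w}^{\gamma+2}(T)}+N\bigl(\|f\|_{\mathbf{\Lambda}_{p,w}^{\gamma}(T)}+\|u\|_{\mathbf{\Lambda}_{p,w}^{\gamma+1}(T)}\bigr).
$$
The remaining first-order term is removed by interpolation and the standard exponential rescaling $u\mapsto e^{-\lambda t}u$ applied on a subdivision of $(0,T)$ short enough to control the Muckenhoupt constant on each subinterval. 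Existence then follows from the method of continuity along $L_\tau:=(1-\tau)\Delta+\tau L$, $\tau\in[0,1]$, since the assumptions on $L_\tau$ are preserved uniformly and solvability at $\tau=0$ is furnished by \cite{C24}; uniqueness is immediate by applying the a priori estimate to the difference of two solutions.

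The main obstacle I anticipate is precisely the spatial freezing step at the integer Zygmund exponents $\gamma\in\mathbb{N}$. Classical Schauder freezing relies on the pointwise bound $|a_{ij}(t,x)-a_{ij}(t,x_0)|\leq K|x-x_0|^{\gamma}$, which is unavailable in $\Lambda^{\gamma}$ since $C^{\gamma}\subsetneq\Lambda^{\gamma}$ by Proposition \ref{25.12.06.17.46}; one only has control of the $[\gamma]^{-}$-th order finite difference $\mathcal{D}_h^{[\gamma]^{-}}a_{ij}$. The requisite smallness of $\zeta_r(a_{ij}(t,\cdot)-a_{ij}(t,x_0))$ in $\dot\Lambda^{\gamma}$ as $r\to 0$ must therefore be extracted through a Littlewood--Paley or paraproduct decomposition compatible with the finite-difference definition of the Zygmund norm, and the product estimate above must be justified at integer orders. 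Reconciling this spatial technicality with the Muckenhoupt weight in time and the mixed-norm integrability structure --- so that the absorption constants remain uniform in $p$ and $[w]_{A_p}$ --- is where the argument will demand the most care.
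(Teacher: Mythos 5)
Your overall architecture mirrors the paper's: reduce to zero initial data by the trace theorem, prove an a priori estimate by freezing the spatial variable and localizing with a cutoff and partition of unity, absorb the lower-order terms, and conclude existence by the method of continuity with uniqueness from the a priori bound. But two steps in your plan are either unresolved or would fail.

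First, you correctly flag the integer case $\gamma\in\mathbb{N}$ as the crux but leave it open, gesturing at a Littlewood--Paley or paraproduct resolution. The paper's actual mechanism is much more elementary and you should know it: by Proposition \ref{25.12.06.17.46}, $\Lambda^{1}(\mathbb{R}^d)\subset C^{0,1}_{\log}(\mathbb{R}^d)$, which yields the pointwise modulus $|a_{ij}(t,x)-a_{ij}(t,y)|\leq N|x-y|\log_2(2/|x-y|)$. Although worse than Lipschitz, this still tends to zero as $|x-y|\to 0$, and that is all the absorption needs. The paper therefore carries out the freezing argument only for $\gamma\in(0,1]$, with modulus $A_\gamma(R)=R^{\gamma}1_{\gamma\in(0,1)}+R\log_2(2/R)1_{\gamma=1}$, and then handles $\gamma\in(n-1,n]$ for $n\geq 2$ by induction: differentiate the equation, observe $\pi_k:=D_k u$ solves a parabolic equation with data of order $\gamma-1$, apply the inductive hypothesis, and use Proposition \ref{25.10.20.14.01}(ii). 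Your plan, as written, tries to freeze directly at arbitrary $\gamma$, where the Zygmund product Lemma \ref{25.09.26.12.44} has extra middle terms and no single small modulus controls the frozen difference in $\dot\Lambda^{\gamma}$; without the induction this does not close.

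Second, your proposal to remove the residual $\|u\|_{L_p((0,T),w\,\mathrm{d}t;L_\infty)}$ term by exponential rescaling $u\mapsto e^{-\lambda t}u$ on short subintervals ``to control the Muckenhoupt constant'' is not viable here. Exponential rescaling buys damping through a maximum principle or a sign on $c-\lambda$, neither of which is available for $L_p(\Lambda^\gamma)$-type estimates; moreover multiplying the temporal weight by $e^{-p\lambda t}$ degrades $[w]_{A_p}$ by a factor of order $e^{c\lambda T}$, which exactly cancels the damping, and shrinking the time interval does not decrease the $A_p$ constant (it is a supremum over all intervals). The paper instead uses Lemma \ref{25.10.25.21.37}, a fundamental-theorem-of-calculus plus weighted maximal function bound $\|u\|_{L_p((a,b),w;L_\infty)}\leq N(b-a)\|\partial_t u\|_{L_p((a,b),w;L_\infty)}$, combined with a subdivision of $(0,T)$ and temporal cutoffs: on each short subinterval the factor $(b-a)$ supplies the smallness, and one propagates the bound forward in time by induction. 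You should replace your rescaling step with this argument. One further, smaller point: the frozen-coefficient input you want from \cite{C24} should be taken in the pointwise-in-time maximal form (Lemma \ref{25.10.25.20.19}) rather than the integrated $L_p$ form, since the partition-of-unity assembly requires taking $\sup_y$ of the localized pieces before integrating in $t$; the integrated version does not commute with that supremum.
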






\section{Preliminaries on H\"older-Zygmund Spaces}
\subsection{Characterizations of H\"older-Zygmund Spaces}
In this subsection, we recall several equivalent descriptions of the H\"older-Zygmund spaces $\Lambda_p^{\gamma}(\mathbb{R}^d)$.  
\begin{prop}
\label{25.10.20.14.01}
Let $\gamma\in(0,\infty)$ and $p\in(1,\infty]$.
\begin{enumerate}[(i)]
    \item (Equivalence with Besov spaces) Let $\{\Delta_j\}_{j\in\mathbb{Z}}$ be standard Littlewood-Paley projection operators defined by
$$
\Delta_j f := \psi_j \ast f,\qquad 
\mathcal{F}[\psi_j](\xi):=\mathcal{F}[\psi](2^{-j}\xi),\qquad 
\supp(\mathcal{F}[\psi])=\{\xi\in\mathbb{R}^d:1/2\leq|\xi|\leq 2\},
$$
with $\mathcal{F}[\psi]\geq 0$ and $\sum_{j\in\mathbb{Z}}\mathcal{F}[\psi](2^{-j}\xi)=1$ for $\xi\neq 0$.
Then 
$$
\|f\|_{\dot{\Lambda}_p^{\gamma}(\mathbb{R}^d)}\simeq \|f\|_{\dot{B}_{\infty,p}^{\gamma}(\mathbb{R}^d)}
$$
where
$$
\|f\|_{\dot{B}_{\infty,p}^{\gamma}(\mathbb{R}^d)}:= \begin{cases}
    \left(\sum_{j\in\mathbb{Z}}2^{j\gamma p}\|\Delta_jf\|_{L_{\infty}(\mathbb{R}^d)}^p\right)^{1/p},\quad &\text{if } p\in(1,\infty),\\
    \sup_{j\in\mathbb{Z}}2^{j\gamma}\|\Delta_jf\|_{L_{\infty}(\mathbb{R}^d)},\quad &\text{if } p=\infty.
\end{cases}
$$
    \item (Characterization via derivatives) 
If $\gamma=n+\delta$, $n\in\mathbb{N}\cup\{0\}$, and $\delta\in(0,1]$, then $$
    \|f\|_{\Lambda_p^{\gamma}(\mathbb{R}^d)}\simeq \sum_{|\alpha|\leq n}\|D^{\alpha}f\|_{L_{\infty}(\mathbb{R}^d)}+\sum_{|\beta|=n}\|D^\beta f\|_{\dot{\Lambda}_p^{\delta}(\mathbb{R}^d)}.
    $$
    The equivalence constants depend only on $d,\gamma$, and $p$.
\end{enumerate}
\end{prop}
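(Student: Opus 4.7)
My plan is to deduce both characterizations from the Littlewood-Paley decomposition, and in particular to obtain (ii) as a corollary of (i) via the Fourier-support properties of $\Delta_j$.

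For (i), I would prove the two inequalities via complementary pointwise bounds on $\mathcal{D}_h^N\Delta_j f$ with $N:=[\gamma]^-$. The direction $\|f\|_{\dot{B}_{\infty,p}^\gamma}\lesssim\|f\|_{\dot{\Lambda}_p^\gamma}$ exploits that $\mathcal{F}[\psi]$ vanishes near the origin, so $\psi$ has all polynomial moments equal to zero; this allows one to rewrite
\[
\Delta_j f(x)=c_N\int_{\mathbb{R}^d}\widetilde{\psi}_j(h)\,\mathcal{D}_h^N f(x)\,\mathrm{d}h
\]
for a suitable Schwartz function $\widetilde{\psi}$ with rapid decay. Taking $L_\infty$ norms in $x$, multiplying by $2^{j\gamma}$, and applying Minkowski's inequality in the $\ell_j^p$ norm (a supremum when $p=\infty$) bounds the right-hand side by $\|f\|_{\dot{\Lambda}_p^\gamma}$. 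For the converse, decompose $f=\sum_j\Delta_j f$ modulo a polynomial annihilated by $\mathcal{D}_h^N$, and combine the two trivial bounds $\|\mathcal{D}_h^N\Delta_j f\|_{L_\infty}\leq 2^N\|\Delta_j f\|_{L_\infty}$ and $\|\mathcal{D}_h^N\Delta_j f\|_{L_\infty}\leq C(2^j|h|)^N\|\Delta_j f\|_{L_\infty}$, which together yield $\min(1,(2^j|h|)^N)$ control. Splitting the sum at $2^j\sim|h|^{-1}$ and inserting the result into the integral defining $\dot{\Lambda}_p^\gamma$ produces a discrete convolution of $\{2^{j\gamma}\|\Delta_j f\|_{L_\infty}\}_j$ against a summable kernel; Young's inequality then gives $\|f\|_{\dot{\Lambda}_p^\gamma}\lesssim\|f\|_{\dot{B}_{\infty,p}^\gamma}$.

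For (ii), with $\gamma=n+\delta$ and $\delta\in(0,1]$, I would use part (i) to reduce the claimed equivalence to the Besov-level identity
\[
\|f\|_{\dot{B}_{\infty,p}^\gamma}\simeq\sum_{|\beta|=n}\|D^\beta f\|_{\dot{B}_{\infty,p}^\delta},
\]
together with standard lower-order control. Since $\mathcal{F}[\Delta_j f]$ is supported in an annulus of radius $\sim 2^j$, Bernstein-type multiplier arguments give $\|D^\beta\Delta_j f\|_{L_\infty}\simeq 2^{jn}\|\Delta_j f\|_{L_\infty}$ uniformly in $j$ for $|\beta|=n$, and the equivalence follows by multiplying by $2^{j\delta}$ and summing in $\ell^p$. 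The remaining bound $\sum_{|\alpha|\leq n-1}\|D^\alpha f\|_{L_\infty}\lesssim \|f\|_{L_\infty}+\sum_{|\beta|=n}\|D^\beta f\|_{L_\infty}$ is a routine interpolation inequality.

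The main technical nuisance I anticipate is the integer endpoint $\delta=1$ in (ii): for $\gamma=n+1$ the naive H\"older norm $\|D^n f\|_{\dot{C}^1}$ would be too restrictive, but $[\gamma]^-=n+2$ means the definition of $\dot{\Lambda}_p^{\gamma}$ uses a second-order iterated difference, which matches the Zygmund condition on $D^n f$. The Littlewood-Paley framework handles this uniformly in $\delta\in(0,1]$, so channeling everything through (i) is exactly what makes the integer case painless. A secondary bookkeeping point is that the representation $f=\sum_j\Delta_j f$ holds only modulo polynomials, but any polynomial of degree less than $N=[\gamma]^-$ is killed by $\mathcal{D}_h^N$, so this ambiguity is invisible in the homogeneous seminorms.
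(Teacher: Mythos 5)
Your proposal is broadly sound and overlaps with the paper's strategy on the converse direction of (ii), but it differs in a few places and has two concrete imprecisions worth flagging. For (i) you supply a full proof (moment cancellation of $\psi$ for $\|f\|_{\dot B^\gamma_{\infty,p}}\lesssim\|f\|_{\dot\Lambda^\gamma_p}$; splitting $\sum_j$ at $2^j\sim|h|^{-1}$ plus Young for the converse), which is the standard textbook argument, while the paper simply cites \cite{CLSW23}; nothing wrong, just more labor than the paper spends. For the ``$\leq$'' direction of (ii) the paper avoids Fourier analysis altogether: since $[\gamma]^-=n+[\delta]^-$, the factorization $\mathcal D_h^{[\gamma]^-}=\mathcal D_h^{[\delta]^-}\circ\mathcal D_h^n$ together with a Taylor representation of $\mathcal D_h^n$ gives the pointwise bound $\|\mathcal D_h^{[\gamma]^-}f\|_{L_\infty}\leq N|h|^n\sum_{|\alpha|=n}\|\mathcal D_h^{[\delta]^-}D^\alpha f\|_{L_\infty}$, which inserted into the defining integral or supremum immediately yields $\|f\|_{\dot\Lambda^\gamma_p}\lesssim\sum_{|\alpha|=n}\|D^\alpha f\|_{\dot\Lambda^\delta_p}$; this is more elementary than channeling through the Besov identity, though both routes work.

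Two points in your sketch of (ii) need repair. First, the two-sided Bernstein estimate $\|D^\beta\Delta_j f\|_{L_\infty}\simeq2^{jn}\|\Delta_j f\|_{L_\infty}$ is false for a single multi-index $\beta$ of length $n$ (take $\widehat f$ concentrated near a coordinate axis orthogonal to $\beta$); what is true, and what you actually need, is
\[
2^{jn}\|\Delta_j f\|_{L_\infty}\simeq\sum_{|\beta|=n}\|D^\beta\Delta_j f\|_{L_\infty},
\]
the upper bound valid term by term and the lower bound obtained by writing $\widehat{\psi_j}(\xi)=\sum_{|\beta|=n}\xi^\beta\,\widehat{m_{j,\beta}}(\xi)$ with multipliers $m_{j,\beta}$ of uniformly bounded $L_1$ norm. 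Second, the ``routine interpolation'' step reduces $\sum_{|\alpha|\leq n-1}\|D^\alpha f\|_{L_\infty}$ to $\|f\|_{L_\infty}$ and $\sum_{|\beta|=n}\|D^\beta f\|_{L_\infty}$, but you still owe the estimate $\|D^\beta f\|_{L_\infty}\lesssim\|f\|_{\Lambda^\gamma_p}$ for $|\beta|=n$, which does not follow from the homogeneous Besov seminorm you have set up. It requires the low/high frequency split $\|D^\beta f\|_{L_\infty}\lesssim\|f\|_{L_\infty}+\sum_{j\geq1}2^{jn}\|\Delta_j f\|_{L_\infty}$, with the high-frequency tail controlled by $\|f\|_{\dot B^\gamma_{\infty,p}}$ via H\"older in $j$ (a geometric sum when $p=\infty$); this is precisely the content of \eqref{25.09.25.11.37}--\eqref{25.09.25.11.36} in the paper, and without it your ``$\geq$'' direction of (ii) is not closed.
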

\begin{proof}
The result $(i)$ is well-known, \textit{e.g.}, see \cite[Theorem 1.5, Theorem 1.6]{CLSW23}.
    
For $(ii)$,  iterating elementary difference estimates,    we have
    $$
    \|\mathcal{D}^{[\gamma]^{-}}_hf\|_{L_{\infty}(\mathbb{R}^d)}\leq |h|^{n}\sum_{|\alpha|=n}\|\mathcal{D}_h^{[\delta]^{-}}D^{\alpha}f\|_{L_{\infty}(\mathbb{R}^d)}.
    $$
    Therefore,
    $$
    \|f\|_{\dot{\Lambda}_p^{\gamma}(\mathbb{R}^d)}\leq N(n,p) \sum_{|\alpha|=n}\|D^{\alpha}f\|_{\dot{\Lambda}_p^{\delta}(\mathbb{R}^d)},
    $$
    which proves the “$\leq$” direction.

For the converse inequality, note that for any multi-index $\alpha$,
    \begin{equation}
    \label{25.09.25.11.53}
    \|\Delta_jD^{\alpha}f\|_{L_{\infty}(\mathbb{R}^d)}\leq N2^{j|\alpha|}\|\Delta_{j}f\|_{L_{\infty}(\mathbb{R}^d)}.
    \end{equation}
    Hence,
    \begin{equation}
    \label{25.09.25.11.37}
    \begin{aligned}
    \|D^{\alpha}f\|_{L_{\infty}(\mathbb{R}^d)}\leq& N\sum_{j\in\mathbb{Z}}2^{j|\alpha|}\|\Delta_jf\|_{L_{\infty}(\mathbb{R}^d)}\\
    \leq& N\left(\|f\|_{L_{\infty}(\mathbb{R}^d)}+\sum_{j=1}^{\infty}2^{j|\alpha|}\|\Delta_jf\|_{L_{\infty}(\mathbb{R}^d)}\right).
    \end{aligned}
    \end{equation}
    For $|\alpha|\leq n$ and $p\in(1,\infty)$, H\"older's inequality gives
    \begin{equation}
    \label{25.09.25.11.36}
    \begin{aligned}
    \sum_{j=1}^{\infty}2^{j|\alpha|}\|\Delta_jf\|_{L_{\infty}(\mathbb{R}^d)}&\leq \left(\sum_{j=1}^{\infty}2^{j\gamma p}\|\Delta_jf\|_{L_{\infty}(\mathbb{R}^d)}^p\right)^{1/p}\left(\sum_{j=1}^{\infty}2^{j(|\alpha|-\gamma)p'}\right)^{1/p'}\\
    &\leq N\left(\sum_{j=1}^{\infty}2^{j\gamma p}\|\Delta_jf\|_{L_{\infty}(\mathbb{R}^d)}^p\right)^{1/p}\leq N\|f\|_{\dot{B}_{\infty,p}^{\gamma}(\mathbb{R}^d)},
    \end{aligned}
    \end{equation}
    where $p'=p/(p-1)$.
    Combining \eqref{25.09.25.11.37}, \eqref{25.09.25.11.36}, and $(i)$, we obtain the desired bound for $p\in(1,\infty)$.
    When $p=\infty$, we instead use the trivial bound
$$
\sum_{j=1}^\infty 2^{j|\alpha|}\|\Delta_j f\|_{L_\infty}
\leq \sup_{j\in\mathbb{Z}}\big(2^{j\gamma}\|\Delta_j f\|_{L_\infty}\big)\,
\sum_{j=1}^{\infty}2^{j(|\alpha|-\gamma)}
\leq N\|f\|_{\dot{B}_{\infty,\infty}^{\gamma}(\mathbb{R}^d)}.
$$
Thus, for all $p\in(1,\infty]$,
$$
    \|D^{\alpha}f\|_{L_{\infty}(\mathbb{R}^d)}\leq N\|f\|_{\Lambda^{\gamma}_p(\mathbb{R}^d)},\quad \forall |\alpha|\leq n.
    $$    
    Finally, by $(i)$ and \eqref{25.09.25.11.53}, for $|\beta|=n$ we have
    \begin{align*}
        \|D^{\beta}f\|_{\dot{\Lambda}^{\delta}_p(\mathbb{R}^d)}&\simeq \left(\sum_{j\in\mathbb{Z}}2^{j\delta p}\|\Delta_jD^{\beta}f\|_{L_{\infty}(\mathbb{R}^d)}^p\right)^{1/p}\leq N\left(\sum_{j\in\mathbb{Z}}2^{j\gamma p}\|\Delta_jf\|_{L_{\infty}(\mathbb{R}^d)}^p\right)^{1/p}
    \end{align*}
    with the obvious modification when $p=\infty$ (replace $\ell_p$ by $\ell_\infty$). 
    Summing over all $|\beta|=n$ completes the proof.
\end{proof}

If $p=\infty$ and $\gamma>0$ is a non-integer, then the Zygmund space $\Lambda^{\gamma}(\mathbb{R}^d)$ coincides with the classical H\"older space $C^{\gamma}(\mathbb{R}^d)$.  
At integer orders $\gamma\in\mathbb{N}$, however, this identification fails.  
The next result states the precise inclusions between the corresponding spaces.
\begin{prop}
\label{25.12.06.17.46}
For $n\in\mathbb{N}$,
    $$
    C^n(\mathbb{R}^d)\subsetneq C^{n-1,1}(\mathbb{R}^d)\subsetneq \Lambda^{n}(\mathbb{R}^d)\subsetneq C_{\log}^{n-1,1}(\mathbb{R}^d),
    $$
    where
    $$
    \|f\|_{C^{n-1,1}_{\log}(\mathbb{R}^d)}:=\sum_{|\alpha|\leq n-1}\|D^\alpha f\|_{L_{\infty}(\mathbb{R}^d)}+\sum_{|\beta|=n-1}\sup_{|x-y|<1}\frac{|D^{\beta}f(x)-D^{\beta}f(y)|}{|x-y|\log_2(2/|x-y|)}.
    $$
\end{prop}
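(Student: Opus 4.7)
The plan is to establish each of the three inclusions by a direct embedding argument, then exhibit explicit one-variable examples (tensored to $\mathbb{R}^d$) to show each inclusion is strict. The embeddings follow from Proposition \ref{25.10.20.14.01}, while the strictness relies on classical cutoffs of $x_1^n\mathbbm{1}_{x_1\geq 0}$ and lacunary trigonometric series.

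For the inclusions, $C^n\subset C^{n-1,1}$ is the mean value theorem applied to $D^\beta f$ with $|\beta|=n-1$, giving $\mathrm{Lip}(D^\beta f)\leq \|f\|_{C^n}$. For $C^{n-1,1}\subset \Lambda^n$, I would invoke Proposition \ref{25.10.20.14.01}(ii) with $\gamma=(n-1)+1$ to reduce to the Lipschitz-to-$\dot\Lambda^1$ step
$$|\mathcal{D}_h^2 g(x)|=|[g(x+2h)-g(x+h)]-[g(x+h)-g(x)]|\leq 2\,\mathrm{Lip}(g)\,|h|.$$
For $\Lambda^n\subset C^{n-1,1}_{\log}$, I would use the Besov characterization of Proposition \ref{25.10.20.14.01}(i), which yields $\|\Delta_j f\|_{L_\infty}\leq N\|f\|_{\Lambda^n}\,2^{-jn}$ for $j\geq 0$, together with the Bernstein-type inequality $\|D^\beta \Delta_j f\|_{L_\infty}\lesssim 2^{j|\beta|}\|\Delta_j f\|_{L_\infty}$. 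Writing $D^\beta f=S_0 D^\beta f+\sum_{j\geq 0}\Delta_j D^\beta f$ for $|\beta|=n-1$ and splitting at $J\in\mathbb{N}$ with $2^{-J}\sim r:=|x-y|<1$, the low-pass piece $S_0 D^\beta f$ contributes $O(r)$ by smoothness, the band $0\leq j<J$ contributes $\lesssim rJ\sim r\log_2(2/r)$ via the mean value theorem applied to each $\Delta_j D^\beta f$ (whose gradient is uniformly bounded), and the tail $j\geq J$ contributes $\lesssim 2^{-J}\sim r$ via the $L_\infty$ bounds; summing yields the log-Lipschitz estimate.

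Strictness proceeds by one-dimensional counterexamples: a smooth cutoff of $x_1^n\mathbbm{1}_{x_1\geq 0}$ lies in $C^{n-1,1}\setminus C^n$; the classical Zygmund function $\sum_{k\geq 1}2^{-k}\cos(2^k x_1)$, antidifferentiated $n-1$ times and multiplied by a cutoff, gives an element of $\Lambda^n\setminus C^{n-1,1}$; and a carefully weighted lacunary series whose dyadic coefficients decay as $2^{-k}$ with an extra slowly growing factor produces an element of $C^{n-1,1}_{\log}\setminus \Lambda^n$ after $(n-1)$-fold antidifferentiation. The main obstacle is this last example: the extra logarithmic weight must be sharp enough to drive the Zygmund seminorm to infinity at the dyadic scale $h=2^{-m}$ yet tame enough to preserve the $|h|\log(1/|h|)$-control on the first differences, and verifying both bounds requires precise dyadic estimates at the critical scale $J\sim\log_2(1/|h|)$.
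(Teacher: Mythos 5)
Your argument for the three inclusions is correct, and for $\Lambda^n\subset C^{n-1,1}_{\log}$ it is a genuine alternative to the paper's method: you use the Littlewood--Paley/Bernstein splitting at scale $J\sim\log_2(2/r)$, whereas the paper uses the elementary telescoping identity
\[
\mathcal{D}_hf=-\tfrac12\sum_{k=0}^{m-1}2^{-k}\mathcal{D}^2_{2^kh}f+2^{-m}\mathcal{D}_{2^mh}f,
\]
followed by the choice $m=\lfloor\log_2(2/|h|)\rfloor$. Both reach the same estimate; the paper's route avoids Besov machinery and is worth knowing as the standard ``Zygmund implies log-Lipschitz'' trick. Your first two strictness examples (a cutoff of $x_1^n\mathbbm{1}_{x_1\ge0}$, and $(n-1)$-fold antiderivatives of the Weierstrass--Zygmund series $\sum_k2^{-k}\cos(2^kx_1)$) are also correct and genuinely different from the paper's, which uses the single one-point-singular function $x^1\log_2(2/|x|)$ (odd about the origin, hence with vanishing centered second difference there).

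The real gap is in the third strictness, $\Lambda^n\subsetneq C^{n-1,1}_{\log}$, and it is not merely a technicality. A lacunary series $\sum_k 2^{-k}m(k)\cos(2^kx_1)$ with an ``extra slowly growing factor'' $m(k)\to\infty$ has Littlewood--Paley blocks $\|\Delta_jf\|_{L_\infty}\sim 2^{-j}m(j)$, and your own splitting at $J\sim\log_2(2/r)$ gives the middle band a contribution of order $r\sum_{k\le J}m(k)$. If $m$ is unbounded and (say) monotone, the Ces\`aro mean $J^{-1}\sum_{k\le J}m(k)$ diverges, so this term is $\gg r\log_2(2/r)$: the proposed upper bound fails, and lacunary series are precisely the functions that tend to saturate such block estimates, so one cannot hope for hidden cancellation to rescue the log-Lipschitz bound. (A non-monotone $m(k)$ that equals $1$ off a very sparse set and spikes to $k$ along a super-lacunary subsequence can be made to work, but that is not what ``a slowly growing factor'' describes, and your write-up does not identify this necessity.) The paper's example neatly sidesteps all of this: it is the even companion $g(x)=|x|\log_2(2/|x|)$ (times a cutoff), for which the log-Lipschitz bound follows from the mean value theorem away from $0$ together with $g(r)\lesssim r\log_2(2/r)$ near $0$, while $\mathcal{D}_h^2g(-h)=g(h)+g(-h)-2g(0)=2|h|\log_2(2/|h|)$ immediately shows $g\notin\Lambda^1$. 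You should either switch to this explicit function or make the sparse-spike structure of $m(k)$ precise and carry out the two-sided dyadic estimates you yourself flag as the obstacle.
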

\begin{proof}
The strict inclusion 
$C^{n}(\mathbb{R}^d) \subsetneq C^{n-1,1}(\mathbb{R}^d)$ 
is well known, so we focus on the remaining inclusions.  
By Proposition \ref{25.10.20.14.01}-(ii),
$$
\|f\|_{\Lambda^{n}(\mathbb{R}^d)}
\simeq
\sum_{|\beta| = n-1}
\|D^{\beta} f\|_{\Lambda^{1}(\mathbb{R}^d)}
+
1_{n \geq 2}
\sum_{|\alpha| \leq n-2}
\|D^{\alpha} f\|_{L_{\infty}(\mathbb{R}^d)}.
$$
Thus, it suffices to establish the result for $n = 1$, namely
\begin{equation}
\label{25.12.03.13.36}
C^{0,1}(\mathbb{R}^d)
\subsetneq
\Lambda^{1}(\mathbb{R}^d)
\subsetneq
C^{0,1}_{\log}(\mathbb{R}^d).
\end{equation}

\textbf{1.} $C^{0,1}(\mathbb{R}^d)\subsetneq \Lambda^1(\mathbb{R}^d)$.

Let $\zeta\in C_c^{\infty}(\mathbb{R}^d)$ be a nonnegative cut-off function near zero, and $f(x):=x^1\log_2(2/|x|)$. 
Then for $|h|<1$,
        $$
        |f(h)-f(0)|=|h^1|\log_2(2/|h|),
        $$
        which implies that $f\zeta\not\in C^{0,1}(\mathbb{R}^d)$.
        However, $f\zeta\in \Lambda^1(\mathbb{R}^d)$, which easily follows from
        \begin{align*}
            |f(h)+f(-h)-2f(0)|=0\leq |h|.
        \end{align*}

        \textbf{2.} $\Lambda^1(\mathbb{R}^d)\subsetneq C^{0,1}_{\log}(\mathbb{R}^d)$.

        The proof of $\Lambda^1(\mathbb{R}^d)\subset C^{0,1}_{\log}(\mathbb{R}^d)$ is provided in \cite[Proposition 2.107]{bahouri2011fourier}, but the strictness is not.
For the sake of completeness, we provide the proof and a counterexample.
    For every $m\in\mathbb{N}$,
    \begin{equation}
    \label{25.10.04.15.19}
    \mathcal{D}_hf=-\frac{1}{2}\sum_{k=0}^{m-1}2^{-k}\mathcal{D}^2_{2^kh}f+2^{-m}\mathcal{D}_{2^mh}f.
    \end{equation}
    Taking $L^\infty$-norms gives
    \begin{align*}
    \|\mathcal{D}_hf\|_{L_{\infty}(\mathbb{R}^d)}&\leq \frac{1}{2}\sum_{k=0}^{m-1}2^{-k}\|\mathcal{D}_{2^kh}^2f\|_{L_{\infty}(\mathbb{R}^d)}+2^{-m}\|\mathcal{D}_{2^mh}f\|_{L_{\infty}(\mathbb{R}^d)}\\
    &\leq \frac{1}{2}\sum_{k=0}^{m-1}2^{-k}(\|f\|_{\dot{\Lambda}^1(\mathbb{R}^d)}2^k|h|)+2^{1-m}\|f\|_{L_{\infty}(\mathbb{R}^d)}\\
    &=\frac{m|h|}{2}\|f\|_{\dot{\Lambda}^1(\mathbb{R}^d)}+2^{1-m}\|f\|_{L_{\infty}(\mathbb{R}^d)}.
    \end{align*}
    For $|h|<1$, choose $m=\lfloor\log_2(2/|h|)\rfloor$ so that $m\leq \log_2(2/|h|)$ and $2^{-m}\leq |h|$.
    Then
    $$
    \|\mathcal{D}_hf\|_{L_{\infty}(\mathbb{R}^d)}\leq |h|\log_2\left(\frac{2}{|h|}\right)\|f\|_{\dot{\Lambda}^1(\mathbb{R}^d)}+2|h|\|f\|_{L_{\infty}(\mathbb{R}^d)}\leq 3\|f\|_{\Lambda^1(\mathbb{R}^d)}|h|\log_2\left(\frac{2}{|h|}\right),
    $$
    since $\log_2(2/|h|)\geq1$ for $|h|<1$.
    Taking the supremum over $|h|<1$ yields
   \begin{equation}
    \label{25.11.12.23.22}
    \sup_{|x-y|<1}\frac{|f(x)-f(y)|}{|x-y|\log_2(2/|x-y|)}\leq 2\|f\|_{\Lambda^1(\mathbb{R}^d)}.
    \end{equation}

   To show $\Lambda^1(\mathbb{R}^d)\neq C^{0,1}_{\log}(\mathbb{R}^d)$,
   we put $g(x):=|x|\log_2(2/|x|)$.
   Then
   $$
   |g(h)-g(-h)-2g(0)|=2|h|\log_2(2/|h|),
   $$
   which implies that $g\zeta\not\in\Lambda^1(\mathbb{R}^d)$.
   However, $g\zeta\in C^{0,1}_{\log}(\mathbb{R}^d)$.
    The proposition is proved.
\end{proof}

\subsection{Properties of H\"older-Zygmund Spaces}
We next record several lemmas used throughout the proof.  
They provide 
\begin{itemize}
    \item a tool for localization via partition of unity (Lemma \ref{25.10.25.19.22}),
    \item an interpolation between Zygmund spaces (Lemma \ref{25.09.26.14.09}),
    \item estimates for products of functions in Zygmund classes (Lemma \ref{25.09.26.12.44}).
\end{itemize}
All of these will be employed in Step 1 of the proof of Theorem \ref{25.10.25.16.21}.

The following partition of unity lemma is a modified version of \cite[Lemma 4.1.1]{K96}, and we include the proof for completeness.
\begin{lem}[Partition of unity]
\label{25.10.25.19.22}
Let $\gamma>0$, and let $\zeta\in C_c^\infty(\mathbb{R}^d)$ be a cut-off function such that 
$\zeta(x)=1$ for $|x|\leq 1$, $\zeta(x)=0$ for $|x|\geq 2$, and $0\leq \zeta\leq 1$. 
Then for any $R>0$, $y\in\mathbb{R}^d$, and $u\in \Lambda^{\gamma}(\mathbb{R}^d)$, we have
\begin{equation}\label{eq9151537}
    \|u\|_{\Lambda^{\gamma}(\mathbb{R}^d)}\leq N_0\sup_{z\in\mathbb{R}^d} \|u\,\zeta_R^z\|_{\dot{\Lambda}^{\gamma}(\mathbb{R}^d)}+N_1\sup_{z\in\mathbb{R}^d}\|u\zeta_R^z\|_{L_{\infty}(\mathbb{R}^d)},
\end{equation}
where $\zeta_R^z(x):=\zeta\left(\frac{x-z}{R}\right)$, $N_0=N_0(d,\gamma,\zeta)$, and $N_1=N_1(d,\gamma,R,\zeta)$.
\end{lem}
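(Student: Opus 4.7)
The plan is to estimate $\|u\|_{L_\infty(\mathbb{R}^d)}$ and the Zygmund seminorm $\|u\|_{\dot{\Lambda}^{\gamma}(\mathbb{R}^d)}$ separately, exploiting the fact that for the choice $z=x$ the cut-off $\zeta_R^x$ is identically $1$ on the closed ball $\overline{B(x,R)}$. Using the decomposition $\|u\|_{\Lambda^\gamma(\mathbb{R}^d)} = \|u\|_{L_\infty(\mathbb{R}^d)} + \|u\|_{\dot{\Lambda}^\gamma(\mathbb{R}^d)}$, I would dispose of the sup-norm first: for any $x\in\mathbb{R}^d$, setting $z=x$ gives $\zeta_R^x(x)=\zeta(0)=1$, so $|u(x)|=|u(x)\zeta_R^x(x)|\leq \|u\zeta_R^x\|_{L_\infty(\mathbb{R}^d)}$, and taking the supremum in $x$ yields $\|u\|_{L_\infty(\mathbb{R}^d)}\leq \sup_{z}\|u\zeta_R^z\|_{L_\infty(\mathbb{R}^d)}$.

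For the Zygmund seminorm, writing $m:=[\gamma]^{-}$, I would split the range of $h$ at the threshold $R/m$. In the small-$h$ regime $|h|\leq R/m$, again with $z=x$, every sample point of the iterated difference satisfies $|x+kh-x|=k|h|\leq R$ for $k=0,\dots,m$, hence $\zeta_R^x(x+kh)=1$ there. Expanding $\mathcal{D}_h^m$ as a linear combination of point values then yields the pointwise identity
$$
\mathcal{D}_h^m u(x)=\mathcal{D}_h^m(u\,\zeta_R^x)(x),
$$
from which $|\mathcal{D}_h^m u(x)|\leq \|u\zeta_R^x\|_{\dot{\Lambda}^\gamma(\mathbb{R}^d)}\,|h|^\gamma\leq (\sup_z\|u\zeta_R^z\|_{\dot{\Lambda}^\gamma(\mathbb{R}^d)})\,|h|^\gamma$. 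In the complementary regime $|h|>R/m$, I would apply the trivial bound $\|\mathcal{D}_h^m u\|_{L_\infty}\leq 2^m\|u\|_{L_\infty}$ together with $|h|^{-\gamma}\leq (m/R)^\gamma$ and the sup-norm estimate of Step 1, producing a contribution of the form $C(d,\gamma)R^{-\gamma}\sup_z \|u\zeta_R^z\|_{L_\infty(\mathbb{R}^d)}$.

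Taking the supremum over all $h\neq 0$ in both regimes and assembling the two pieces of $\|u\|_{\Lambda^\gamma}$ then delivers \eqref{eq9151537} with $N_0=1$ and $N_1=1+2^m m^\gamma R^{-\gamma}$, which is consistent with the advertised dependences (the proof in fact makes $N_0$ independent of $R$ and $\zeta$, which is \emph{a fortiori} admissible). The only non-routine observation is the small-$|h|$ localization: one must let the center $z$ depend on the base point $x$ and verify that all $m+1$ nodes of the finite difference remain inside the region $\{\zeta_R^z\equiv 1\}$, so that passing from $u$ to $u\zeta_R^z$ is an \emph{exact} identity rather than an approximation accompanied by commutator terms involving derivatives of $\zeta_R^z$. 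Once this is in hand, the large-$|h|$ regime is purely quantitative and the constants can be read off directly.
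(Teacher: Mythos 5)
Your proof is correct. The core idea — that for $|h|\le R/[\gamma]^{-}$ one can replace $u$ by $u\zeta_R^z$ inside the iterated difference because $\zeta_R^z$ equals $1$ at every node, and then handle the large-$|h|$ regime trivially via the sup-norm — is the same as the paper's. The execution differs in one respect: the paper works with a \emph{continuous} partition of unity, writing $u(x)=cR^{-d}\int_{|z-y|\le 4R}u(x)\zeta_R^z(x)\,\mathrm{d}z$ for $x\in B_{2R}(y)$ (with $c^{-1}=\int\zeta$) and pulling the finite difference through the integral, whereas you select a \emph{single} translate $z=x$ adapted to the base point. Your choice is slightly cleaner — no integral identity or normalization constant is needed, and it yields $N_0=1$ independent even of $\zeta$ and $d$, which is a mild sharpening of the paper's $N_0=N_0(d,\gamma,\zeta)$ coming from the factor $c\cdot|B_4|$. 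Otherwise the two arguments run in parallel: same threshold $R/[\gamma]^{-}$, same decomposition of $\|u\|_{\Lambda^\gamma}$ into sup-norm and seminorm, same exact cancellation ensuring no commutator terms arise.
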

\begin{proof}
We first claim that
\begin{equation} \label{eq9151546}
\|u\|_{\dot{\Lambda}^{\gamma}(\mathbb{R}^d)} \leq N(\gamma,R)\|u\|_{ L_{\infty}(\mathbb{R}^d)}+ \sup_{y\in\mathbb{R}^d}
\sup_{|h|\leq R/[\gamma]^{-}}\frac{\|\mathcal{D}_h^{[\gamma]^{-}}u\|_{L_{\infty}(B_R(y))}}{|h|^{\gamma}}.
\end{equation}
Indeed, for any $h,y\in\mathbb{R}^d$ and $x\in B_R(y)$,
\begin{align*}
\frac{|\mathcal{D}_h^{[\gamma]^{-}}u(x)|}{|h|^{\gamma}} =& \frac{|\mathcal{D}_h^{[\gamma]^{-}}u(x)|}{|h|^{\gamma}} 1_{|h|\leq \frac{R}{[\gamma]^{-}}} + \frac{|\mathcal{D}_h^{[\gamma]^{-}}u(x)|}{|h|^{\gamma}} 1_{|h|>\frac{R}{[\gamma]^{-}}}
\\
\leq& \sup_{|h|\leq \frac{R}{[\gamma]^{-}}}\frac{\|\mathcal{D}_h^{[\gamma]^{-}}u\|_{L_{\infty}(B_R(y))}}{|h|^{\gamma}} + N(\gamma,R)\|u\|_{L_{\infty}(\mathbb{R}^d)}.
\end{align*}

Next, fix $y\in\mathbb{R}^d$. 
For $x\in B_{2R}(y)$, we have the identity
\begin{equation}
\label{25.09.25.15.29}
    u(x) = cR^{-d} \int_{|z-y|\leq 4R} u(x)\zeta\left(\frac{x-z}{R}\right) \mathrm{d}z=cR^{-d}\int_{|z-y|\leq 4R} u(x)\zeta_R^z(x) \mathrm{d}z,
\end{equation}
where $c^{-1} := \int_{\mathbb{R}^d} \zeta(z) \mathrm{d}z$.
Due to \eqref{25.09.25.15.29},
\begin{equation}
\label{25.10.20.20.19}
\|u\|_{L_{\infty}(\mathbb{R}^d)}\leq N\sup_{z\in\mathbb{R}^d}\|u\zeta_R^z\|_{L_{\infty}(\mathbb{R}^d)}.
\end{equation}
For any natural number $k \leq [\gamma]^{-}$ and $|h| \leq R / [\gamma]^{-}$, we have for $
x \in B_R(y)$,  $x + k h \in B_{2R}(y)$.
Hence, by \eqref{25.09.25.15.29},
\begin{align*}
u(x + k h)
&= c R^{-d} \int_{|z - y| \le 4R} u(x + k h)\, \zeta\!\left(\frac{x + k h - z}{R}\right) \mathrm{d}z \\
&= c R^{-d} \int_{|z - y| \le 4R} u(x + k h)\, \zeta_R^z(x + k h)\, \mathrm{d}z.
\end{align*}
Therefore,
$$
\mathcal{D}_h^{[\gamma]^{-}} u(x)
= c R^{-d} \int_{|z - y| \le 4R} 
\mathcal{D}_h^{[\gamma]^{-}}\big(u \zeta_R^z\big)(x)\, \mathrm{d}z,
\qquad \forall\, x \in B_R(y).
$$
Taking the supremum over $x\in B_R(y)$ yields
\begin{equation}
\label{25.10.20.20.20}
\begin{aligned}
    \frac{\|\mathcal{D}_h^{[\gamma]^{-}}u\|_{L_{\infty}(B_R(y))}}{|h|^{\gamma}} &\leq cR^{-d} \int_{|z-y|\leq 4R} 
\frac{\|\mathcal{D}_h^{[\gamma]^{-}}(u\zeta_R^z)\|_{L_{\infty}(\mathbb{R}^d)}}{|h|^{\gamma}}\,\mathrm{d}z\leq N \|u\zeta_R^z\|_{\dot{\Lambda}^{\gamma}_{\infty}(\mathbb{R}^d)}.
\end{aligned}
\end{equation}
Combining \eqref{eq9151546}, \eqref{25.10.20.20.19}, and \eqref{25.10.20.20.20} completes the proof.
\end{proof}

\begin{lem}[Interpolation inequality]
\label{25.09.26.14.09}
    Let $\gamma\in(0,\infty)$ and $f\in \Lambda^{\gamma}(\mathbb{R}^d)$.
    Then for $\theta\in(0,1)$ and $\varepsilon>0$,
    $$
    \|f\|_{\dot{\Lambda}^{\theta\gamma}(\mathbb{R}^d)}\leq N\varepsilon\|f\|_{\dot{\Lambda}^{\gamma}(\mathbb{R}^d)}+N\varepsilon^{-\frac{\theta}{1-\theta}}\|f\|_{L_{\infty}(\mathbb{R}^d)},\quad \forall \varepsilon>0,
    $$
    where $N$ is independent of $f$ and $\varepsilon$.
\end{lem}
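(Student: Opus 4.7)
The cleanest route is via the Littlewood--Paley characterization recorded in Proposition \ref{25.10.20.14.01}(i), which identifies $\dot\Lambda^{\gamma}(\mathbb{R}^d)$ with the Besov space $\dot B^{\gamma}_{\infty,\infty}(\mathbb{R}^d)$, and likewise $\dot\Lambda^{\theta\gamma}$ with $\dot B^{\theta\gamma}_{\infty,\infty}$. It then suffices to prove the corresponding bound for the Besov norms:
$$
\sup_{j\in\mathbb{Z}} 2^{j\theta\gamma}\|\Delta_j f\|_{L_\infty(\mathbb{R}^d)}
\leq
N\varepsilon\sup_{j\in\mathbb{Z}} 2^{j\gamma}\|\Delta_j f\|_{L_\infty(\mathbb{R}^d)}
+
N\varepsilon^{-\frac{\theta}{1-\theta}}\|f\|_{L_\infty(\mathbb{R}^d)},
$$
which is a standard frequency splitting argument.

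\textbf{Execution.} Fix a threshold $J\in\mathbb{Z}$, to be chosen later, and split the supremum at $j = J$. For high frequencies $j \geq J$, I factor one positive power of $2^{j\gamma}$ out of the dyadic block:
$$
2^{j\theta\gamma}\|\Delta_j f\|_{L_\infty}
= 2^{-j(1-\theta)\gamma}\cdot 2^{j\gamma}\|\Delta_j f\|_{L_\infty}
\leq 2^{-J(1-\theta)\gamma}\sup_{k\in\mathbb{Z}} 2^{k\gamma}\|\Delta_k f\|_{L_\infty}.
$$
For low frequencies $j < J$, I discard the cancellation and bound each $\Delta_j$ trivially by convolution with $\psi_j$, which gives $\|\Delta_j f\|_{L_\infty} \leq \|\psi\|_{L_1}\|f\|_{L_\infty}$ uniformly in $j$ (since $\|\psi_j\|_{L_1}=\|\psi\|_{L_1}$), so
$$
2^{j\theta\gamma}\|\Delta_j f\|_{L_\infty}
\leq \|\psi\|_{L_1}\,2^{J\theta\gamma}\|f\|_{L_\infty}.
$$

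\textbf{Optimization in $J$.} Combining the two regimes,
$$
\sup_{j\in\mathbb{Z}} 2^{j\theta\gamma}\|\Delta_j f\|_{L_\infty}
\leq 2^{-J(1-\theta)\gamma}\|f\|_{\dot B^{\gamma}_{\infty,\infty}}
+ \|\psi\|_{L_1}\,2^{J\theta\gamma}\|f\|_{L_\infty}.
$$
I then choose $J\in\mathbb{Z}$ as the integer closest to $-\log_2(\varepsilon)/((1-\theta)\gamma)$, which forces $2^{-J(1-\theta)\gamma}\simeq \varepsilon$ and $2^{J\theta\gamma}\simeq \varepsilon^{-\theta/(1-\theta)}$, with implicit constants depending only on $\gamma$ and $\theta$. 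Translating back through Proposition \ref{25.10.20.14.01}(i) gives the claimed inequality.

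\textbf{Comments on difficulty.} This is essentially a routine frequency-localization argument; the only minor point is that $J$ must be an integer, but rounding costs only a universal multiplicative factor that is absorbed into $N$. No further results beyond the Besov equivalence in Proposition \ref{25.10.20.14.01}(i) and the elementary Young inequality $\|\Delta_j f\|_{L_\infty}\leq\|\psi\|_{L_1}\|f\|_{L_\infty}$ are needed. I do not foresee a real obstacle.
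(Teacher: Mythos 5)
Your proof is correct, but it takes a genuinely different route from the paper. You work entirely on the homogeneous Besov side via Proposition~\ref{25.10.20.14.01}(i), split the frequency sum at a threshold $J$, bound the high-frequency tail by the $\dot B^{\gamma}_{\infty,\infty}$ semi-norm and the low-frequency tail by $\|f\|_{L_\infty}$ (via Young's convolution inequality applied to each $\Delta_j$), and then optimize over $J$. The paper instead invokes the abstract real interpolation identity $\Lambda^{\theta\gamma} = (B^0_{\infty,\infty},\Lambda^\gamma)_{\theta,\infty}$ (Bergh--L\"ofstr\"om/Triebel), deduces the multiplicative estimate $\|f\|_{\Lambda^{\theta\gamma}}\le N\|f\|_{L_\infty}^{1-\theta}\|f\|_{\Lambda^\gamma}^{\theta}$ for the \emph{inhomogeneous} norms, then runs a dilation/scaling argument $f\mapsto f(c\cdot)$ and sends $c\to\infty$ to isolate the homogeneous semi-norms, and finishes with Young's inequality. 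Your approach is more elementary and self-contained: you never need the abstract interpolation machinery, and you never face the inhomogeneous-to-homogeneous issue that the paper resolves by scaling, because the Littlewood--Paley characterization is homogeneous from the start. The paper's route is more in the spirit of quoting standard interpolation theory off the shelf, at the cost of the extra scaling step. Both give constants depending only on $d,\gamma,\theta$ (your rounding of $J$ to the nearest integer costs a bounded factor of at most $2^{\max(\theta,1-\theta)\gamma}$, correctly absorbed into $N$).
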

\begin{proof}
    By \cite[Theorem 6.4.5]{bergh2012interpolation} and Proposition \ref{25.10.20.14.01},

    $$
    \Lambda^{\theta\gamma}(\mathbb{R}^d)=(B_{\infty,\infty}^{0}(\mathbb{R}^d),\Lambda^{\gamma}(\mathbb{R}^d))_{\theta,\infty},
    $$
    where $B_{\infty,\infty}^{0}(\mathbb{R}^d)$ denotes the Besov space of order $0$.
    By \cite[(6) in Section 2.4.1]{MR781540},
    \begin{equation}
    \label{25.10.22.17.19}
    \begin{aligned}
    \|f\|_{\Lambda^{\theta\gamma}(\mathbb{R}^d)}\simeq\|f\|_{(B_{\infty,\infty}^{0}(\mathbb{R}^d),\Lambda^{\gamma}(\mathbb{R}^d))_{\theta,\infty}}\leq 2\|f\|_{B_{\infty,\infty}^{0}(\mathbb{R}^d)}^{1-\theta}\|f\|_{\Lambda^{\gamma}(\mathbb{R}^d)}^{\theta}.
    \end{aligned}
    \end{equation}
Let
$$
S_0 f(x)
= \int_{\mathbb{R}^d} f(x-y)\Phi(y)\,\mathrm{d}y,
\qquad
\Delta_j f(x)
= \int_{\mathbb{R}^d} f(x-y)\psi_j(y)\,\mathrm{d}y,
$$
where $\Phi := \sum_{j\leq 0}\psi_j$.
Then we obtain
\begin{equation}
\label{25.12.06.14.07}
\|f\|_{B_{\infty,\infty}^0(\mathbb{R}^d)}:=\|S_0f\|_{L_{\infty}(\mathbb{R}^d)}+\sup_{j\in\mathbb{N}}\|\Delta_jf\|_{L_{\infty}(\mathbb{R}^d)}
\leq C_{\psi}\|f\|_{L_{\infty}(\mathbb{R}^d)},
\end{equation}
where
$$
C_{\psi}
:= \|\Phi\|_{L^1(\mathbb{R}^d)}
   + \sup_{j\in\mathbb{Z}}\|\psi_j\|_{L^1(\mathbb{R}^d)}
<\infty.
$$
    For any $c,\eta>0$,
    \begin{equation}
    \label{25.10.22.17.21}
    \|f(c\cdot)\|_{\dot{\Lambda}^{\eta}(\mathbb{R}^d)}=c^{\eta}\|f\|_{\dot{\Lambda}^{\eta}(\mathbb{R}^d)}.
    \end{equation}
    Combining \eqref{25.10.22.17.19}–\eqref{25.10.22.17.21} yields
    \begin{equation}
    \begin{aligned}
    \|f(c\cdot)\|_{\Lambda^{\theta\gamma}(\mathbb{R}^d)}&=\|f\|_{L_{\infty}(\mathbb{R}^d)}+c^{\theta\gamma}\|f\|_{\dot{\Lambda}^{\theta\gamma}(\mathbb{R}^d)}\\
    &\leq N\|f\|_{L_{\infty}(\mathbb{R}^d)}^{1-\theta}(\|f\|_{L_{\infty}(\mathbb{R}^d)}+c^{\gamma}\|f\|_{\dot{\Lambda}^{\gamma}(\mathbb{R}^d)})^{\theta}\\
    &\leq N\|f\|_{L_{\infty}(\mathbb{R}^d)}+Nc^{\theta\gamma}\|f\|_{L_{\infty}(\mathbb{R}^d)}^{1-\theta}\|f\|_{\dot{\Lambda}^{\gamma}(\mathbb{R}^d)}^{\theta},
    \end{aligned}
    \end{equation}
    where $N$ is independent of $c$.
    Dividing both sides by $c^{\theta\gamma}$ and letting $c \to \infty$, we obtain
    $$
    \|f\|_{\dot{\Lambda}^{\theta\gamma}(\mathbb{R}^d)}\leq N\|f\|_{L_{\infty}(\mathbb{R}^d)}^{1-\theta}\|f\|_{\dot{\Lambda}^{\gamma}(\mathbb{R}^d)}^{\theta}.
    $$
    Applying Young’s inequality gives the desired estimate.
\end{proof}

\begin{lem}
\label{25.09.26.12.44}
    Let $\gamma\in(0,\infty)$ and $f,g\in \Lambda^{\gamma}(\mathbb{R}^d)$.
    Then $fg\in \Lambda^{\gamma}(\mathbb{R}^d)$, and
    \begin{equation}
    \label{25.09.26.12.48}
    \begin{aligned}
    \|fg\|_{\dot{\Lambda}^{\gamma}(\mathbb{R}^d)}\leq& \|f\|_{L_{\infty}(\mathbb{R}^d)}\|g\|_{\dot{\Lambda}^{\gamma}(\mathbb{R}^d)}+\sum_{j=1}^{[\gamma]^{-}-1}\binom{[\gamma]^{-}}{j}\|f\|_{\dot{\Lambda}^{j}(\mathbb{R}^d)}\|g\|_{\dot{\Lambda}^{\gamma-j}(\mathbb{R}^d)}\\
    &+\|f\|_{\dot{\Lambda}^{\gamma}(\mathbb{R}^d)}\|g\|_{L_{\infty}(\mathbb{R}^d)}.
    \end{aligned}
    \end{equation}
\end{lem}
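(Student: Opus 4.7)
The plan is to apply the discrete Leibniz rule to $\mathcal{D}_h^{[\gamma]^-}(fg)$ and then bound each summand by Zygmund seminorms of $f$ and $g$ at matched orders summing to $\gamma$. Setting $n := [\gamma]^-$, I would first establish, by induction on $n$ starting from the elementary identity $\mathcal{D}_h(AB)(x) = A(x+h)\,\mathcal{D}_h B(x) + \mathcal{D}_h A(x)\,B(x)$, the discrete Leibniz formula
\[
\mathcal{D}_h^n(fg)(x) \;=\; \sum_{k=0}^{n}\binom{n}{k}\, \mathcal{D}_h^{k} f\bigl(x+(n-k)h\bigr)\, \mathcal{D}_h^{n-k} g(x).
\]
Taking the $L^\infty$ norm and applying the triangle inequality then yields
\[
\|\mathcal{D}_h^n(fg)\|_{L^\infty} \;\leq\; \sum_{k=0}^n \binom{n}{k}\,\|\mathcal{D}_h^k f\|_{L^\infty}\,\|\mathcal{D}_h^{n-k} g\|_{L^\infty}.
\]

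For the endpoint terms $k=0$ and $k=n$, the definition of $\|\cdot\|_{\dot\Lambda^\gamma}$ immediately gives $\|\mathcal{D}_h^n g\|_{L^\infty} \leq |h|^\gamma\|g\|_{\dot\Lambda^\gamma}$ and its symmetric analogue for $f$, producing the first and last summands of the claim. For intermediate $k$ with $1 \leq k \leq n-1$, note that $\gamma \in [n-1,n)$ implies $\gamma - k \in [n-k-1,n-k)$, so $[\gamma - k]^- = n-k$ and therefore the $g$-factor is controlled by the definition: $\|\mathcal{D}_h^{n-k} g\|_{L^\infty} \leq |h|^{\gamma-k}\|g\|_{\dot\Lambda^{\gamma-k}}$. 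For the $f$-factor I would use $\|\mathcal{D}_h^k f\|_{L^\infty} \leq |h|^k\|f\|_{\dot\Lambda^k}$, so the two bounds combine to yield $|h|^\gamma\|f\|_{\dot\Lambda^k}\|g\|_{\dot\Lambda^{\gamma-k}}$; dividing by $|h|^\gamma$ and taking the supremum in $h$ then completes the estimate.

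The only delicate point is the $f$-estimate $\|\mathcal{D}_h^k f\|_{L^\infty} \leq C|h|^k\|f\|_{\dot\Lambda^k}$ at an integer order $k$, since the definition of $\|f\|_{\dot\Lambda^k}$ only controls $(k+1)$-st order differences. I would handle this through the Littlewood-Paley characterization of Proposition~\ref{25.10.20.14.01}(i): combining the Bernstein-type bound $\|\mathcal{D}_h^k \Delta_j f\|_{L^\infty} \leq C\min\{1, (|h|2^j)^k\}\|\Delta_j f\|_{L^\infty}$ with the decay $\|\Delta_j f\|_{L^\infty} \leq C 2^{-jk}\|f\|_{\dot\Lambda^k}$, then splitting the sum at the dyadic scale $2^j \sim |h|^{-1}$ and summing the two pieces. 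Non-integer exponents require no such argument and follow directly from the definition. This localization/frequency-splitting step is the only nontrivial ingredient, while the rest of the proof is a bookkeeping exercise over the binomial expansion.
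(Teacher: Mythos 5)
Your argument follows the paper's proof exactly at its first step: the discrete Leibniz identity for $\mathcal D_h^{n}(fg)$ with $n=[\gamma]^-$, which is the sole content the paper supplies, followed by the triangle inequality and matching of orders. You are also right to single out the intermediate terms $1\le k\le n-1$ as the only nontrivial point, and the $g$-factor estimate $\|\mathcal D_h^{n-k}g\|_{L_\infty}\le|h|^{\gamma-k}\|g\|_{\dot\Lambda^{\gamma-k}}$ does follow directly from the definition since $[\gamma-k]^-=n-k$.

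However, the Littlewood--Paley repair you propose for the $f$-factor cannot succeed, because the estimate $\|\mathcal D_h^kf\|_{L_\infty}\le C|h|^k\|f\|_{\dot\Lambda^k}$ at integer $k$ is simply false. Splitting $\sum_j\|\mathcal D_h^k\Delta_jf\|_{L_\infty}$ at $2^j\sim|h|^{-1}$ and using $\|\Delta_jf\|_{L_\infty}\lesssim 2^{-jk}\|f\|_{\dot\Lambda^k}$ gives a contribution $\sim|h|^k\|f\|_{\dot\Lambda^k}$ from each of the roughly $\log_2(1/|h|)$ dyadic scales between $1$ and $|h|^{-1}$, and for even lower frequencies you must switch to $\|\Delta_jf\|_{L_\infty}\lesssim\|f\|_{L_\infty}$ to avoid outright divergence. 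So the best this yields is $\|\mathcal D_h^kf\|_{L_\infty}\lesssim|h|^k\log(2/|h|)\,\|f\|_{\Lambda^k}$, with a mandatory logarithmic loss. That this loss is genuine is precisely the content of the paper's own Proposition~\ref{25.12.06.17.46}: $\Lambda^1(\mathbb R^d)\not\subset C^{0,1}(\mathbb R^d)$, i.e.\ first differences of a $\Lambda^1$ function are of order $|h|\log(2/|h|)$ and not $O(|h|)$. Consequently the lemma, read literally, cannot be deduced from the Leibniz identity by the intermediate bounds you (or the paper, which just declares the inequality ``directly obtained'') invoke; the statement would hold verbatim if the middle seminorms were the $k$-th order Lipschitz quantities $\sup_h|h|^{-k}\|\mathcal D_h^kf\|_{L_\infty}$, or, up to a constant, one could absorb the middle terms into the two endpoint terms via Lemma~\ref{25.09.26.14.09} and Young's inequality, which is all the later applications actually require. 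Your write-up mirrors the paper's argument and shares its gap; the Littlewood--Paley patch does not, and cannot, close it.
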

\begin{proof}
The inequality \eqref{25.09.26.12.48} can be directly obtained from
$$
\mathcal{D}_h^{[\gamma]^{-}}(fg)(x)
=
\sum_{j=0}^{[\gamma]^{-}}
\binom{[\gamma]^{-}}{j}
\mathcal{D}_h^{j} f(x + ([\gamma]^{-} - j)h)
\, \mathcal{D}_h^{[\gamma]^{-} - j} g(x).
$$
The lemma is proved.
\end{proof}

\section{Proof of Theorem \ref{25.10.20.19.28}}
\label{25.12.06.14.56}

\subsection{Proof of Theorem \ref{25.10.20.19.28} with zero initial data}
In this subsection, we consider the Cauchy problem
\begin{equation} \label{main_loc}
    \partial_t u (t,x)
    = \sum_{i,j=1}^{d} a_{ij}(t,x) D_{ij}u(t,x)
      + \sum_{i=1}^{d} b_i(t,x) D_i u
      + c(t,x)u(t,x) + f(t,x),
\end{equation}
subject to the zero initial condition $u(0,\cdot)=0$.

\begin{thm}
\label{25.10.25.16.21}
Let $T \in (0,\infty)$, $p \in (1,\infty]$, $\gamma > 0$, and let $w \in A_p(\mathbb{R})$ with $[w]_{A_p}\leq K_0$ (put $w\equiv1$ and $K_0=1$ when $p=\infty$).
Suppose that Assumption \ref{25.10.12.23.59} ($\gamma$) holds.  
Then, for any $f \in  \mathbf{\Lambda}_{p,w}^{\gamma}(T)$, there exists a unique solution 
$u \in \mathbf{H}_{p,w}^{\gamma+2}(T)$ to the equation \eqref{main_loc} in $(0,T) \times \mathbb{R}^d$, with the zero initial condition $u(0,x) = 0$.
Moreover, the following estimate holds:
$$
    \|u\|_{\mathbf{H}_{p,w}^{\gamma+2}(T)}
    \leq N\|f\|_{\mathbf{\Lambda}_{p,w}^{\gamma}(T)},
$$
where $N = N(d,\gamma,K_0,K,\nu,T,p)$.
\end{thm}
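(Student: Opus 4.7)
The plan is to establish the a priori estimate first and then deduce existence by the method of continuity. The a priori bound is obtained through the standard freezing-of-coefficients plus partition of unity argument, reducing the case of space-time coefficients to the purely time-dependent case already treated in \cite{C24}. From that reference I take as input the following: for any measurable $\bar a_{ij}(t)$ satisfying \eqref{25.10.20.19.33} together with $\sum_{i,j}\|\bar a_{ij}\|_{L_\infty}\le K$, and any $g\in\mathbf{\Lambda}^{\gamma}_{p,w}(T)$, the model problem $\partial_t v=\bar a_{ij}(t)D_{ij}v+g$ with $v(0,\cdot)=0$ admits a unique solution $v\in\mathbf{H}^{\gamma+2}_{p,w}(T)$ satisfying $\|v\|_{\mathbf H^{\gamma+2}_{p,w}(T)}\le N\|g\|_{\mathbf{\Lambda}^\gamma_{p,w}(T)}$.

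For $R>0$ and $z\in\bR^d$, set $v_z:=u\,\zeta_R^z$ with $\zeta_R^z$ as in Lemma \ref{25.10.25.19.22}. Differentiating yields
\begin{align*}
\partial_t v_z=\sum_{i,j}a_{ij}(t,z)D_{ij}v_z+I_1+I_2+I_3,
\end{align*}
where $I_1:=\sum_{i,j}\bigl(a_{ij}(t,\cdot)-a_{ij}(t,z)\bigr)D_{ij}v_z$ is the freezing error, $I_2$ collects the commutators produced when $\zeta_R^z$ is pulled through the second-order operator (terms involving $D\zeta_R^z\cdot Du$ and $D^2\zeta_R^z\cdot u$), and $I_3:=\zeta_R^z\bigl(\sum_i b_iD_iu+cu+f\bigr)$ gathers the lower-order contributions. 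Applying the model estimate with $\bar a_{ij}(t):=a_{ij}(t,z)$ gives $\|v_z\|_{\mathbf H^{\gamma+2}_{p,w}(T)}\le N\sum_{k=1}^{3}\|I_k\|_{\mathbf\Lambda^\gamma_{p,w}(T)}$. The decisive term is $I_1$: since $\supp v_z\subset B_{2R}(z)$, applying Lemma \ref{25.09.26.12.44} to the product $(a_{ij}-a_{ij}(t,z))\cdot D_{ij}v_z$ produces a leading contribution weighted by $\|a_{ij}(t,\cdot)-a_{ij}(t,z)\|_{L_\infty(B_{2R}(z))}=\mathcal O(R^{\min(\gamma,1)})$, with all remaining cross terms of strictly lower differential order in $v_z$ and controlled via the interpolation inequality of Lemma \ref{25.09.26.14.09}. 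The commutator $I_2$ and the lower-order term $I_3$ are bounded analogously, again producing only intermediate-order norms of $u$.

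Taking the supremum over $z\in\bR^d$, using Lemma \ref{25.10.25.19.22} to pass from $\sup_z\|v_z\|_{\dot\Lambda^{\gamma+2}}$ back to $\|u\|_{\Lambda^{\gamma+2}}$, integrating in $t$ against the weight, and fixing $R$ small enough that the $\mathcal O(R^{\min(\gamma,1)})$ factor can be absorbed on the left, I arrive at an inequality of the form
\begin{align*}
\|u\|_{\mathbf\Lambda^{\gamma+2}_{p,w}(T)}\le \tfrac12\|u\|_{\mathbf\Lambda^{\gamma+2}_{p,w}(T)}+N\bigl(\|u\|_{\mathbf\Lambda^{\gamma+2-\tau}_{p,w}(T)}+\|f\|_{\mathbf\Lambda^\gamma_{p,w}(T)}\bigr),
\end{align*}
for some $\tau\in(0,1)$. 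A further interpolation step reduces the intermediate norm to $\|u\|_{\mathbf\Lambda^{\gamma+2}_{p,w}(T)}$ (absorbable) plus a low-regularity norm, which is in turn controlled using the zero initial datum and the equation by a Gronwall-type argument in $t$, yielding the a priori bound on $\|u\|_{\mathbf\Lambda^{\gamma+2}_{p,w}(T)}$. Combined with $\|\partial_tu\|_{\mathbf\Lambda^\gamma_{p,w}(T)}\le N(\|u\|_{\mathbf\Lambda^{\gamma+2}_{p,w}(T)}+\|f\|_{\mathbf\Lambda^\gamma_{p,w}(T)})$ (from the equation and Lemma \ref{25.09.26.12.44}), this gives the full estimate on $\|u\|_{\mathbf H^{\gamma+2}_{p,w}(T)}$. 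Existence then follows from the method of continuity along the deformation $a_{ij}^\lambda:=(1-\lambda)\nu\delta_{ij}+\lambda a_{ij}$, $\lambda\in[0,1]$, whose endpoint $\lambda=0$ is covered by the input from \cite{C24}; uniqueness is immediate from the estimate.

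The principal technical obstacle is the bound for $I_1$ at \emph{integer} Zygmund exponents $\gamma\in\bN$, where $\Lambda^\gamma\supsetneq C^\gamma$ by Proposition \ref{25.12.06.17.46} and no classical H\"older estimate for $a_{ij}(t,\cdot)-a_{ij}(t,z)$ is directly available. The plan is to avoid this by working exclusively through the product formula of Lemma \ref{25.09.26.12.44}: the difference enters only via its $L_\infty(B_{2R}(z))$ norm, which is $\mathcal O(R^{\min(\gamma,1)})$ irrespective of integrality, while all higher-order factors $\|a_{ij}\|_{\dot\Lambda^j}$ are paired with strictly lower-order Zygmund norms of $v_z$ and hence absorbed by interpolation, bypassing the integer obstruction entirely.
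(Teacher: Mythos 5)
Your high-level strategy (a priori bound via freezing-of-coefficients plus Lemma \ref{25.10.25.19.22}, then method of continuity from the time-only model of \cite{C24}) matches the paper, but there are two concrete gaps, the first of which undercuts the exact point you claim to be resolving.

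\textbf{The modulus at the integer exponent is wrong.} You claim $\|a_{ij}(t,\cdot)-a_{ij}(t,z)\|_{L_\infty(B_{2R}(z))}=\mathcal O(R^{\min(\gamma,1)})$ ``irrespective of integrality,'' arguing that this is how you bypass the integer obstruction. But this is precisely where the obstruction bites. For $\gamma=1$ the coefficients lie in $\Lambda^1$, which by Proposition \ref{25.12.06.17.46} is strictly larger than $C^{0,1}$ and embeds only into $C^{0,1}_{\log}$; the explicit function $f(x)=x^1\log_2(2/|x|)$ shows that $|f(h)-f(0)|=|h|\log_2(2/|h|)$, so a $\Lambda^1$ coefficient has modulus $\mathcal O(R\log_2(2/R))$, not $\mathcal O(R)$. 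The paper accounts for this through $A_\gamma(R)=R^\gamma 1_{\gamma\in(0,1)}+R\log_2(2/R)1_{\gamma=1}$, and the absorption still works only because $A_\gamma(R)\to 0$ as $R\to 0$. The argument is salvageable with that replacement, but your proposal as written asserts a false modulus and explicitly claims this falsity as the key step, so the ``bypass'' is not achieved.

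\textbf{The direct treatment of $\gamma>1$ via the product formula is not established.} You propose to estimate $\|(a_{ij}-a_{ij}(t,z))D_{ij}v_z\|_{\dot\Lambda^\gamma}$ through Lemma \ref{25.09.26.12.44} for all $\gamma>0$ at once, treating the cross terms $\|(a-\bar a)\|_{\dot\Lambda^j}\|D_{ij}v_z\|_{\dot\Lambda^{\gamma-j}}$ by interpolation. For $\gamma>1$ this sum is nonempty, and the $L_\infty(B_{2R})$ smallness helps only the leading term; to make the intermediate terms small relative to $\|v_z\|_{\dot\Lambda^{\gamma+2}}$ you would need to localize $a-\bar a$ by another cut-off (introducing $R^{-j}$ factors) and then compensate by taking $\varepsilon$ in Lemma \ref{25.09.26.14.09} of size $o(R^j)$, and it is not clear that all constants can be arranged to leave a small absorbable prefactor. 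The paper avoids this entirely: it carries out the freezing argument only for $\gamma\in(0,1]$ (where the middle sum is absent or degenerate), and then passes to general $\gamma\in(n-1,n]$ by induction, differentiating the equation once so that $\pi_k=D_k u$ solves a lower-order problem. Your proposal should either restrict the localization step to $\gamma\le1$ and add the inductive reduction, or give the missing estimates for the cross terms. The remaining ingredients (the reduction to $b_i=c=0$ via interpolation, the time-splitting/Gronwall step to control $\|u\|_{L_p(w\,\mathrm dt;L_\infty)}$ from zero initial data, and the method of continuity) are in line with what the paper does.
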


The next two lemmas play crucial roles in the proof of the a priori estimate.  
\begin{lem}
\label{25.10.25.20.19}
    Let $\eta>0$, $p\in(1,\infty]$, and $T>0$.
    Suppose that
    \begin{itemize}
        \item $a=(a_{ij}(t))_{i,j}$ is independent of $x$.
        \item $a=(a_{ij}(t))_{i,j}$ is bounded and satisfies \eqref{25.10.20.19.33}.
        \item $b_i=c=0$ for all $i=1,\cdots,d$.
    \end{itemize}
If $u$ is a solution to \eqref{main_loc} with $f\in \mathbf{\Lambda}_{p,w}^{\eta}(T)$, then for each $t\in(0,T)$,
    \begin{equation} \label{const_loc}
        \|u(t,\cdot)\|_{\Lambda^{\eta+2}(\mathbb{R}^d)} \leq N(T) \cM_t\left(\|f(*,\cdot)\|_{\Lambda^{\eta}(\mathbb{R}^d)} 1_{(0,T)}(*)\right)(t),
    \end{equation}
where
$$
\mathcal{M}_tf(t):=\sup_{r>0}\aint_{t-r}^{t+r}f(s)\mathrm{d}s.
$$
In particular, $N(T)$ is increasing in $T$.
\end{lem}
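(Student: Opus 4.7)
The plan is to write $u$ explicitly via the fundamental solution associated with the spatially constant coefficients and then exploit Littlewood-Paley smoothing to recover two orders of spatial regularity while producing the time-maximal function on the right-hand side. Since $a_{ij}$ depends only on $t$, a Fourier transform in $x$ reduces \eqref{main_loc} to an ODE in $\xi$ and yields
$$
u(t,x) = \int_0^t P(t,s,\cdot)\ast f(s,\cdot)(x)\,\mathrm{d}s,
\qquad
\widehat{P(t,s,\cdot)}(\xi)=e^{-\xi^{\top} B(t,s)\xi},
\quad B(t,s):=\int_s^t a(r)\,\mathrm{d}r.
$$
Uniform ellipticity \eqref{25.10.20.19.33} and boundedness of $a$ give $\nu(t-s)I\le B(t,s)\le K(t-s)I$, so $P(t,s,\cdot)$ is a Gaussian probability density with $\|P(t,s,\cdot)\|_{L_1}=1$.

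\textbf{Frequency-localized smoothing.} By Proposition \ref{25.10.20.14.01}(i), $\|u(t,\cdot)\|_{\dot\Lambda^{\eta+2}(\bR^d)}\simeq \sup_{j\in\bZ}2^{j(\eta+2)}\|\Delta_j u(t,\cdot)\|_{L_\infty(\bR^d)}$. I would pick a fattened block $\widetilde\Delta_j$ whose Fourier symbol is a smooth bump $\tilde\psi(2^{-j}\cdot)$ equal to $1$ on $\supp\mathcal{F}[\psi_j]$, so that $\Delta_j=\Delta_j\widetilde\Delta_j$, and set $K_j(t,s):=\widetilde\Delta_j P(t,s,\cdot)$. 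Rescaling $\xi=2^j\eta$ gives $K_j(t,s,x)=2^{jd}g_j(2^jx)$ with
$$
g_j(y)=(2\pi)^{-d}\int_{\bR^d} e^{iy\cdot\eta}\tilde\psi(\eta)e^{-2^{2j}\eta^{\top}B(t,s)\eta}\,\mathrm{d}\eta.
$$
Since $|\eta|\sim 1$ on $\supp\tilde\psi$, the exponential factor is at most $e^{-c(t-s)2^{2j}}$, and repeated integration by parts in $\eta$ produces the pointwise estimate $|g_j(y)|\le N(1+|y|)^{-d-1}e^{-c(t-s)2^{2j}}$, whence
$$
\|K_j(t,s)\|_{L_1(\bR^d)}\le N e^{-c(t-s)2^{2j}},
$$
with $c,N>0$ depending only on $d,\nu,K$, and $\tilde\psi$.

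\textbf{Maximal-function majorization.} Since $\Delta_j$ commutes with spatial convolution, $\Delta_ju(t,\cdot)=\int_0^t K_j(t,s)\ast\Delta_jf(s,\cdot)\,\mathrm{d}s$. For $j\ge 0$, Young's inequality and Proposition \ref{25.10.20.14.01}(i) yield
$$
2^{j(\eta+2)}\|\Delta_ju(t,\cdot)\|_{L_\infty(\bR^d)}\le N 2^{2j}\int_0^t e^{-c(t-s)2^{2j}}\|f(s,\cdot)\|_{\Lambda^\eta(\bR^d)}\,\mathrm{d}s.
$$
Splitting the integral into the dyadic shells $\{t-s\in (2^{k-1},2^k]\cdot 2^{-2j}\}$, $k\ge 0$, bounding $\int_{t-r}^{t}\|f(s,\cdot)\|_{\Lambda^\eta}1_{(0,T)}(s)\,\mathrm{d}s\le 2r\,\mathcal{M}_t(\|f(*,\cdot)\|_{\Lambda^\eta(\bR^d)}1_{(0,T)}(*))(t)$, and using the summability $\sum_{k\ge 0}2^k e^{-c2^{k-1}}<\infty$, produces a majorization by $N\mathcal{M}_t(\|f(*,\cdot)\|_{\Lambda^\eta(\bR^d)}1_{(0,T)}(*))(t)$. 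For $j<0$ the gain $2^{-2j}$ is unhelpful; instead I would use the trivial bound $\|\Delta_jf(s,\cdot)\|_{L_\infty}\le N\|f(s,\cdot)\|_{\Lambda^\eta}$ together with $\int_0^t\|f(s,\cdot)\|_{\Lambda^\eta}\,\mathrm{d}s\le 2T\,\mathcal{M}_t(\|f(*,\cdot)\|_{\Lambda^\eta(\bR^d)}1_{(0,T)}(*))(t)$ (the latter by choosing $r=T$ in the supremum defining $\mathcal{M}_t$), producing the same bound with a constant depending linearly on $T$.

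\textbf{Conclusion.} Taking the supremum over $j\in\bZ$ gives $\|u(t,\cdot)\|_{\dot\Lambda^{\eta+2}}\le N(T)\,\mathcal{M}_t(\|f(*,\cdot)\|_{\Lambda^\eta(\bR^d)}1_{(0,T)}(*))(t)$; the $L_\infty$-part of $\|u(t,\cdot)\|_{\Lambda^{\eta+2}}$ is bounded directly by $\int_0^t\|P(t,s,\cdot)\|_{L_1}\|f(s,\cdot)\|_{L_\infty}\,\mathrm{d}s\le 2T\,\mathcal{M}_t(\|f(*,\cdot)\|_{\Lambda^\eta(\bR^d)}1_{(0,T)}(*))(t)$, so \eqref{const_loc} follows with a constant that is manifestly increasing in $T$. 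The step I expect to require the most care is the $L_1$-bound on $K_j(t,s)$, since it is the mechanism converting heat-kernel smoothing into the $2^{-2j}$ gain that ultimately produces the maximal function on the time side; the dyadic majorization argument itself is then standard.
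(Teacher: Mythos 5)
Your argument is correct, but it follows a genuinely different route from the paper. The paper disposes of this lemma in one line by invoking \cite[Lemma~2.6]{C24} (a general smoothing estimate for time-measurable pseudo-differential operators with an abstract scaling function $\phi$, applied here with $\phi(\lambda)=\lambda^{\eta}$, $\gamma=2$). You instead prove the estimate from scratch: the Duhamel representation with the anisotropic Gaussian kernel $\widehat{P(t,s,\cdot)}(\xi)=e^{-\xi^\top B(t,s)\xi}$, the uniform $L_1$ bound $\|\widetilde{\Delta}_j P(t,s,\cdot)\|_{L_1}\lesssim e^{-c(t-s)2^{2j}}$ obtained via scaling plus integration by parts on the compactly supported symbol, and the dyadic-shell majorization turning $2^{2j}\int_0^t e^{-c(t-s)2^{2j}}g(s)\,ds$ into $\cM_t g(t)$ for $j\ge 0$ and using the crude bound for $j<0$ together with the $L_\infty$-part handled by $\|P\|_{L_1}=1$. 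This is exactly the mechanism underlying \cite[Lemma~2.6]{C24} specialized to the second-order elliptic case, so what you produce is a concrete, verifiable proof of the instance actually needed, whereas the paper buys generality (pseudo-differential operators, general $\phi$-modulus) at the cost of opacity by delegating to earlier machinery. Two small technical points worth tidying: when rescaling, the function $g_j$ still depends on $(t,s,j)$ through $\tau:=(t-s)2^{2j}$, and the integration-by-parts bound should be stated as $|g_j(y)|\le N(1+|y|)^{-d-1}(1+\tau)^{d+1}e^{-c_0\tau}\le N(1+|y|)^{-d-1}e^{-c\tau}$, absorbing the polynomial into the exponential with a smaller $c$; and the dyadic-shell decomposition should explicitly include the inner piece $t-s\le 2^{-2j}$ (bounded directly by $2\cdot 2^{-2j}\cM_t(\cdot)(t)$), since the shells $\{t-s\in(2^{k-1},2^k]2^{-2j}\}$, $k\ge 0$, do not cover the whole interval $(0,t)$.
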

\begin{proof}
    This is a direct consequence of \cite[Lemma 2.6]{C24} with $\phi(\lambda)=\lambda^{\eta}$, $\gamma=2$, and $\phi_{\gamma}(\lambda)=\lambda^{\eta+2}$.
\end{proof}

\begin{lem}
\label{25.10.25.21.37}
Let $\gamma\in(0,\infty)$, $p\in(1,\infty]$, and $w\in A_p$ such that $[w]_{A_p}\leq K_0$ (put $w\equiv1$ and $K_0=1$ when $p=\infty$).
Suppose that $u \in \mathbf{\Lambda}_{p,w}^{\gamma}(a,b)$ satisfies $u(a,\cdot) = 0$.
Then we have
    \begin{equation} \label{eq9162040}
        \|u\|_{L_{p}((a,b),w\,\mathrm{d}t;L_{\infty}(\mathbb{R}^d))} \leq N(b-a)\|\partial_tu\|_{L_{p}((a,b),w\,\mathrm{d}t;L_{\infty}(\mathbb{R}^d))},
    \end{equation}
    where $N=N(d,p,K_0)$.
\end{lem}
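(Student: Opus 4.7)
The plan is to reduce the inequality \eqref{eq9162040} to a weighted one-dimensional Hardy-type estimate on the scalar function $g(s):=\|\partial_s u(s,\cdot)\|_{L_\infty(\mathbb{R}^d)}$. Since $u(a,\cdot)=0$, the fundamental theorem of calculus applied pointwise in $x$ yields
\[
u(t,x)=\int_a^t \partial_s u(s,x)\,\mathrm{d}s,
\qquad
\|u(t,\cdot)\|_{L_\infty(\mathbb{R}^d)}\leq \int_a^t g(s)\,\mathrm{d}s,
\]
so the task becomes to control $\big\|\int_a^{\cdot} g(s)\,\mathrm{d}s\big\|_{L_p((a,b),w\,\mathrm{d}t)}$ by $(b-a)\|g\|_{L_p((a,b),w\,\mathrm{d}t)}$. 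The norm on the right side of \eqref{eq9162040} is exactly $\|g\|_{L_p((a,b),w\,\mathrm{d}t)}$, so it suffices to prove this one-dimensional estimate.

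The endpoint $p=\infty$ is immediate from the trivial bound $\int_a^t g(s)\,\mathrm{d}s\leq (b-a)\|g\|_{L_\infty(a,b)}$, so the content of the lemma lies in the range $p\in(1,\infty)$. For this range my plan is to dominate the running integral by the centered Hardy-Littlewood maximal operator $\mathcal{M}_t$ introduced in Lemma \ref{25.10.25.20.19}. Since for every $t\in(a,b)$ one has $(a,t)\subset(t-(b-a),t+(b-a))$, I get the pointwise estimate
\[
\int_a^t g(s)\,\mathrm{d}s\leq 2(b-a)\,\mathcal{M}_t\!\big(g\,\mathbf{1}_{(a,b)}\big)(t).
\]
Muckenhoupt's classical weighted maximal theorem then gives $\|\mathcal{M}_t h\|_{L_p(\mathbb{R},w\,\mathrm{d}t)}\leq N(p,[w]_{A_p})\|h\|_{L_p(\mathbb{R},w\,\mathrm{d}t)}$; applying this with $h=g\,\mathbf{1}_{(a,b)}$ and using $[w]_{A_p}\leq K_0$ delivers \eqref{eq9162040}.

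I do not anticipate a genuine obstacle in this argument: the only nontrivial input beyond the fundamental theorem of calculus is the $A_p$-boundedness of the Hardy-Littlewood maximal operator on $\mathbb{R}$, which is standard and produces a constant depending only on $p$ and $K_0$. This is consistent with (in fact slightly sharper than) the stated dependence $N=N(d,p,K_0)$, since the $L_\infty(\mathbb{R}^d)$ norm enters only through the scalar majorant $g$ and no genuine dimensional dependence is created.
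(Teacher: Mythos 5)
Your argument is correct and is essentially identical to the paper's: both reduce via the fundamental theorem of calculus to a pointwise bound of $\int_a^t g(s)\,\mathrm{d}s$ by $2(b-a)\,\mathcal{M}_t(g\,\mathbf{1}_{(a,b)})(t)$ and then invoke the weighted Hardy--Littlewood maximal function theorem. The only cosmetic difference is that the paper uses $2(t-a)$ where you use $2(b-a)$, which changes nothing.
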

\begin{proof}
    By the fundamental theorem of calculus,
    \begin{align*}
        |u(t,x)| \leq \int_a^t |\partial_t u(s,x)| \mathrm{d}s = (t-a)\aint_a^t |\partial_t u(s,x)| \mathrm{d}s.
    \end{align*}
    Consequently,
    \begin{equation*}
        \|u(t,\cdot)\|_{L_{\infty}(\mathbb{R}^d)} \leq 2(t-a)\cM_t(\|\partial_tu(\ast,\cdot)\|_{L_{\infty}(\mathbb{R}^d)}1_{(a,b)}(\ast))(t),\quad \forall t\in(a,b).
    \end{equation*} 
Applying the weighted Hardy--Littlewood maximal function theorem (see, \textit{e.g.}, \cite[Theorem 2.2]{DK18}) yields \eqref{eq9162040}.  
This completes the proof.
\end{proof}

We now prove the a priori estimate for the case with zero initial data.
\begin{lem}[A priori estimate]
\label{25.10.30.10.49}
    Let $\gamma\in(0,\infty)$, $p\in(1,\infty]$, $T>0$, and $w\in A_p$ with $[w]_{A_p}\leq K_0$ (put $w\equiv1$ and $K_0=1$ when $p=\infty$). 
    Suppose that $f\in \mathbf{\Lambda}_{p,w}^{\gamma}(T)$, and $u\in \mathbf{H}_{p,w}^{\gamma+2}(T)$ is a solution to \eqref{main_loc}. 
    Then
    \begin{equation} \label{eq9161218}
        \|\partial_tu\|_{\mathbf{\Lambda}_{p,w}^{\gamma}(T)} + \|u\|_{\mathbf{\Lambda}_{p,w}^{\gamma+2}(T)} \leq N  \|f\|_{\mathbf{\Lambda}_{p,w}^{\gamma}(T)},
    \end{equation}
    where $N=N(d,\nu,K,p,K_0,T)$.
\end{lem}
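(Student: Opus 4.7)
The plan is to combine a classical freezing-coefficients argument with the spatial partition of unity of Lemma~\ref{25.10.25.19.22}, reduce the variable-coefficient equation to the constant-coefficient bound of Lemma~\ref{25.10.25.20.19}, and close the estimate by the interpolation inequality (Lemma~\ref{25.09.26.14.09}) together with Lemma~\ref{25.10.25.21.37}, which converts the zero initial condition into smallness on short time intervals. Fix a cut-off $\zeta\in C_c^\infty(\mathbb{R}^d)$ as in Lemma~\ref{25.10.25.19.22} and a scale $R>0$ to be chosen. For each $z\in\mathbb{R}^d$ set $v^z:=u\,\zeta_R^z$; a direct computation gives
\[
\partial_t v^z=\sum_{i,j}a_{ij}(t,z)\,D_{ij}v^z+H^z,
\]
where $H^z$ collects $\zeta_R^z f$, the coefficient-variation term $[a_{ij}(t,\cdot)-a_{ij}(t,z)]\zeta_R^z D_{ij}u$, the commutators $-a_{ij}(t,z)\bigl(2D_i\zeta_R^z D_j u+u D_{ij}\zeta_R^z\bigr)$, and the lower-order pieces $\zeta_R^z(b_i D_i u+cu)$.

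Since $a_{ij}(t,z)$ is $x$-independent and satisfies the ellipticity~\eqref{25.10.20.19.33}, Lemma~\ref{25.10.25.20.19} with $\eta=\gamma$ applies to $v^z$ and yields, pointwise in $t$,
\[
\|v^z(t,\cdot)\|_{\Lambda^{\gamma+2}}\le N(T)\,\mathcal{M}_t\!\bigl(\|H^z(*,\cdot)\|_{\Lambda^\gamma}1_{(0,T)}(*)\bigr)(t).
\]
Taking the supremum over $z$, which passes inside $\mathcal{M}_t$ for non-negative integrands, applying the weighted Hardy--Littlewood maximal theorem for $A_p$ weights, and invoking Lemma~\ref{25.10.25.19.22} pointwise in $t$ leads to
\[
\|u\|_{\mathbf{\Lambda}_{p,w}^{\gamma+2}(T)}\le N\bigl\|\sup_z\|H^z\|_{\Lambda^\gamma}\bigr\|_{L_p((0,T),w)}+N_1(R)\,\|u\|_{L_p((0,T),w;L_\infty)}.
\]

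The core task is to estimate $\sup_z\|H^z\|_{\Lambda^\gamma}$. Using Proposition~\ref{25.10.20.14.01}(ii), the difference $a_{ij}(t,\cdot)-a_{ij}(t,z)$ has $L_\infty$ norm on $\supp(\zeta_R^z)$ of order $R^{\gamma\wedge1}K$, while $\|\zeta_R^z\|_{\dot\Lambda^j}\lesssim R^{-j}$. Distributing this smallness through the product rule of Lemma~\ref{25.09.26.12.44} for the coefficient-variation term and combining it with the scaling of $\zeta_R^z$ for the commutator and lower-order pieces, every intermediate-order quantity $\|u\|_{\Lambda^{\gamma+2-\eta}}$ with $\eta>0$ can be reduced to $\|u\|_{L_\infty}$ by a single application of Lemma~\ref{25.09.26.14.09}. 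The result is a bound of the form
\[
\sup_z\|H^z(t,\cdot)\|_{\Lambda^\gamma}\le N(R)\|f(t,\cdot)\|_{\Lambda^\gamma}+\omega(R)\,\|u(t,\cdot)\|_{\Lambda^{\gamma+2}}+N(R)\,\|u(t,\cdot)\|_{L_\infty},
\]
with $\omega(R)\to 0$ as $R\to 0$. Choosing $R$ so small that $N\omega(R)\le 1/2$ absorbs the $\Lambda^{\gamma+2}$ contribution and leaves
\[
\|u\|_{\mathbf{\Lambda}_{p,w}^{\gamma+2}(T)}\le N_R\|f\|_{\mathbf{\Lambda}_{p,w}^\gamma(T)}+N_R\|u\|_{L_p((0,T),w;L_\infty)}.
\]

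To handle the remaining $L_\infty$ term I read off from the PDE that $\|\partial_t u(t,\cdot)\|_{L_\infty}\le N\|u(t,\cdot)\|_{\Lambda^{\gamma+2}}+\|f(t,\cdot)\|_{\Lambda^\gamma}$, and then invoke Lemma~\ref{25.10.25.21.37} with $u(0,\cdot)=0$ on $[0,T_0]$ to obtain $\|u\|_{L_p((0,T_0),w;L_\infty)}\le NT_0\|\partial_t u\|_{L_p((0,T_0),w;L_\infty)}$. Choosing $T_0$ with $N_R\cdot NT_0\le 1/2$ closes the estimate on $[0,T_0]$. For arbitrary $T$ the argument is iterated over $\lceil T/T_0\rceil$ consecutive intervals of length $T_0$; the $L_\infty$ values of $u$ at interface times are controlled by $\int_0^{t_k}\|\partial_s u\|_{L_\infty}\,ds$ (using $u(0,\cdot)=0$), which is in turn dominated via H\"older's inequality against the $A_p$ weight by a multiple of $\|u\|_{\mathbf{\Lambda}_{p,w}^{\gamma+2}(t_k)}+\|f\|_{\mathbf{\Lambda}_{p,w}^\gamma(t_k)}$, and summing over the pieces gives the final bound with $N=N(d,\gamma,K_0,K,\nu,T,p)$. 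The bound on $\|\partial_t u\|_{\mathbf{\Lambda}_{p,w}^\gamma(T)}$ is read off from the equation. The main technical obstacle is the perturbation step: each term of the product rule for $[a-a(z)]\zeta_R^z\cdot D_{ij}u$ mixes differently scaled Zygmund norms of $u$, $\zeta_R^z$, and $a-a(z)$, so the bookkeeping must be arranged so that a net small factor $\omega(R)$ survives in front of $\|u\|_{\Lambda^{\gamma+2}}$ while only $\|u\|_{L_\infty}$ remains uncontrolled; the integer case $\gamma\in\mathbb{N}$ is no different, since all of Lemmas~\ref{25.09.26.12.44}--\ref{25.10.25.21.37} are stated uniformly for $\gamma>0$.
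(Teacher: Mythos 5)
Your overall strategy matches the paper's: localize with the partition of unity of Lemma~\ref{25.10.25.19.22}, freeze the coefficients at the center of each ball so that Lemma~\ref{25.10.25.20.19} applies, absorb the coefficient-variation term by choosing $R$ small, pass the supremum over centers inside the maximal function and apply the weighted maximal function theorem, and then remove the remaining $\|u\|_{L_p((0,T),w;L_\infty)}$ term by a time-slicing argument via Lemma~\ref{25.10.25.21.37}. So the proof is correct in spirit. There are, however, two places where your version diverges from the paper in ways worth flagging.

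First, the paper does \emph{not} run the freezing argument directly for all $\gamma>0$. It reduces to $b_i=c=0$ (absorbing the lower-order terms via the interpolation inequality \eqref{25.11.13.11.35}) and then proves \eqref{25.10.30.10.17} only for $\gamma\in(0,1]$; the case $\gamma>1$ is handled by differentiating the equation, applying the induction hypothesis at index $\gamma-1$ to $\pi_k=D_k u$, and using Proposition~\ref{25.10.20.14.01}(ii). The reason is the product rule of Lemma~\ref{25.09.26.12.44}: for $\gamma\in(0,1)$ there are no middle terms, and at $\gamma=1$ only one, so the smallness $\omega(R)$ lands cleanly on the single top-order term. For $\gamma>1$ the product rule produces intermediate terms $\|a-a(z)\|_{\dot\Lambda^j}\|D_{ij}v^z\|_{\dot\Lambda^{\gamma-j}}$, and as you note these have no $R$-smallness; you propose to absorb them by Lemma~\ref{25.09.26.14.09} with a free $\varepsilon$. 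That mechanism can indeed close, but it requires a separate absorption step for each intermediate order and careful tracking of how the support of $(a-a(z))\zeta_R^z$ interacts with the \emph{global} $L_\infty$ and $\dot\Lambda^j$ norms appearing in Lemma~\ref{25.09.26.12.44}; the paper's induction sidesteps all of this. Your statement that the integer case ``is no different'' is also slightly imprecise: the coefficient oscillation $\|a(t,\cdot)-a(t,z)\|_{L_\infty(B_{2R}(z))}$ is $O(R^\gamma)$ for $\gamma\in(0,1)$ and $O(R)$ for $\gamma>1$, but at $\gamma=1$ one only gets $O(R\log(2/R))$ from the embedding $\Lambda^1\subset C^{0,1}_{\log}$ (Proposition~\ref{25.12.06.17.46}); the paper builds this into $A_\gamma(R)$. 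The conclusion $\omega(R)\to0$ survives, so your argument is salvageable, but the stated rate $R^{\gamma\wedge1}$ is wrong at the integer.

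Second, the time-iteration step. The paper multiplies $u$ by smooth cutoffs $\eta_l$ in time so that each $q^l=u\eta_l$ has \emph{zero} initial data at $s_{l-1}$ and satisfies a PDE with the extra lower-order forcing $u\partial_t\eta_l$ (of regularity $\Lambda^\gamma$, controlled by Step~1 on $(0,s_l)$). Your version instead tracks $\|u(t_k,\cdot)\|_{L_\infty}$ via $\int_0^{t_k}\|\partial_s u\|_{L_\infty}\,\mathrm{d}s$ and Hölder against the $A_p$ weight. That is workable, but it conceals a subtlety: Lemma~\ref{25.10.25.21.37} requires the function to vanish at the left endpoint, so to apply it on $(t_k,t_{k+1})$ you implicitly shift by $u(t_k,\cdot)$, and the shifted function then satisfies the PDE with the extra forcing $\mathcal{L}u(t_k,\cdot)$, whose $\Lambda^\gamma$ norm needs a \emph{pointwise-in-time} control of $\|u(t_k,\cdot)\|_{\Lambda^{\gamma+2}}$ that is not available without something like the trace theorem. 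If instead you avoid shifting and simply write $\|u(t,\cdot)\|_{L_\infty}\le\|u(t_k,\cdot)\|_{L_\infty}+\int_{t_k}^t\|\partial_s u\|_{L_\infty}\,\mathrm{d}s$ and absorb the second piece for $T_0$ small, the iteration closes (with exponentially growing but finitely many constants), and this is essentially what the paper's cutoff trick implements cleanly. So the idea is right, but the bookkeeping in your sketch needs tightening to avoid circularity with the trace statement. With these two caveats, your proposal is a correct alternative organization of the same proof.
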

\begin{proof}
We divide the proof into two steps.

\textbf{Step 1.} We first establish
\begin{equation} \label{25.10.30.10.17}
        \|\partial_tu\|_{\mathbf{\Lambda}_{p,w}^{\gamma}(T)} + \|u\|_{\mathbf{\Lambda}_{p,w}^{\gamma+2}(T)} \leq N\left(  \|f\|_{\mathbf{\Lambda}_{p,w}^{\gamma}(T)} + \|u\|_{L_p((0,T),w\,\mathrm{d}t;L_{\infty}(\mathbb{R}^d))}\right),
    \end{equation}
    where $N=N(d,\nu,K,K_0,p,T)$.

Since $u$ satisfies the equation \eqref{main_loc}, we have
\begin{align*}
\|\partial_t u\|_{\mathbf{\Lambda}_{p,w}^{\gamma}(T)}
=&
\left\|\sum_{i,j=1}^{d}a_{ij} D_{ij}u + \sum_{i=1}^{d}b_i D_i u + c u + f\right\|_{\mathbf{\Lambda}_{p,w}^{\gamma}(T)}\\
\leq&
N\left(\|u\|_{\mathbf{\Lambda}_{p,w}^{\gamma+2}(T)} + \|f\|_{\mathbf{\Lambda}_{p,w}^{\gamma}(T)}\right).
\end{align*}
Thus, in order to obtain \eqref{25.10.30.10.17}, it remains  to estimate $\|u\|_{\mathbf{\Lambda}_{p,w}^{\gamma+2}(T)}$ in terms of the right-hand side.

To achieve this, it is enough to deal with the case $b_i \equiv c \equiv 0$.  
Indeed, once the required estimate holds under this assumption, we have
\begin{align*}
\|\partial_t u\|_{\mathbf{\Lambda}_{p,w}^{\gamma}(T)}
+
\|u\|_{\mathbf{\Lambda}_{p,w}^{\gamma+2}(T)}
\leq&
N\left\|f + \sum_{i=1}^{d}b_i D_i u + c u\right\|_{\mathbf{\Lambda}_{p,w}^{\gamma}(T)}\\
\leq&
N\left(\|f\|_{\mathbf{\Lambda}_{p,w}^{\gamma}(T)}
+
\|u\|_{\mathbf{\Lambda}_{p,w}^{\gamma+1}(T)}\right).
\end{align*}
We now estimate the remaining $\|u\|_{\mathbf{\Lambda}_{p,w}^{\gamma+1}(T)}$ term.  
By Lemma \ref{25.09.26.14.09},
\begin{equation}
\label{25.11.13.11.35}
\|u\|_{\mathbf{\Lambda}_{p,w}^{\gamma+1}(T)}
\le
N\varepsilon \|u\|_{\mathbf{\Lambda}_{p,w}^{\gamma+2}(T)}
+
N(\varepsilon)\,
\|u\|_{L_p((0,T), w\,\mathrm{d}t; L_{\infty}(\mathbb{R}^d))}.
\end{equation}
Choosing $\varepsilon > 0$ sufficiently small allows the first term on the right-hand side to be absorbed into the left-hand side of the estimate, thereby yielding \eqref{25.10.30.10.17}.
Therefore, we assume that $f\in \mathbf{\Lambda}_{p,w}^{\gamma}(T)$, and $u\in \mathbf{H}_{p,w}^{\gamma+2}(T)$ is a solution to \eqref{main_loc} with $b_i=c=0$ and the zero initial condition.

We first consider $\gamma\in(0,1]$.
Let $\zeta\in C_c^\infty(\mathbb{R}^d)$ such that $\zeta=1$ on $B_1$, $\zeta=0$ on $\mathbb{R}^d\setminus B_2$, and $0\leq\zeta\leq1$. 
For $R>0$ and $y\in\mathbb{R}^d$, define $\zeta_R^y(x):=\zeta((x-y)/R)$ and set $v^y:=u\zeta_R^y$. 
Then $v^y$ satisfies
\begin{align*}
    \partial_tv^{y} &= \sum_{i,j=1}^{d}a_{ij}D_{ij}v^{y} + \zeta_R^y f - \sum_{i,j=1}^{d}(a_{ij}+a_{ji})D_iu D_j\zeta_R^y - \sum_{i,j=1}^{d}a_{ij}uD_{ij}\zeta_R^y
    \\
    &= \sum_{i,j=1}^{d}a_{ij}(t,y)D_{ij}v^{y} + F,
\end{align*}
where
\begin{align*}
    F &:= \sum_{i,j=1}^{d}(a_{ij}-a_{ij}(t,y))D_{ij}v^{y} +  \zeta_R^y f - \sum_{i,j=1}^{d}(a_{ij}+a_{ji})D_iu D_j\zeta_R^y - \sum_{i,j=1}^{d}a_{ij}uD_{ij}\zeta_R^y 
    \\
    &=: \sum_{i,j=1}^{d}(a_{ij}-a_{ij}(t,y))D_{ij}v^{y} + G.
\end{align*}
Due to $\Lambda^\gamma = C^\gamma$ for $\gamma\in(0,1)$, and \eqref{25.11.12.23.22}, $\|a_{ij}(t,\cdot)-a_{ij}(t,y)\|_{L_{\infty}(B_{2R}(y))} \leq NA_{\gamma}(R)$, where $N=N(\nu)$ and $A_\gamma(R):=R^{\gamma}1_{\gamma\in(0,1)}+R\log_2(2/R)1_{\gamma=1}$.
Using Lemma \ref{25.09.26.12.44}, Proposition \ref{25.10.20.14.01}-($ii$) and Lemma \ref{25.09.26.14.09} with $\varepsilon=A_{\gamma}(R)$, we have
\begin{equation}
\label{eq9151852}
\begin{aligned}
    &\left\|\sum_{i,j=1}^{d}(a_{ij}(t,\cdot)-a_{ij}(t,y))D_{ij}v^{y}(t,\cdot)\right\|_{\Lambda^{\gamma}(\mathbb{R}^d)} \\
    \leq& N(d)\sum_{i,j=1}^{d}\|(a_{ij}(t,\cdot)-a_{ij}(t,y))\|_{L_{\infty}(B_{2R}(y))} \|D_x^2 v^{y}(t,\cdot)\|_{\Lambda^{\gamma}(\mathbb{R}^d)}
    \\
    \quad &+ N(d)\sum_{i,j=1}^{d}\|(a_{ij}(t,\cdot)-a_{ij}(t,y))\|_{\Lambda^{\gamma}(\mathbb{R}^d)} \|D_x^2 v^{y}(t,\cdot)\|_{L_{\infty}(\mathbb{R}^d)}
    \\
    \leq& N A_{\gamma}(R) \|v^{y}(t,\cdot)\|_{\Lambda^{\gamma+2}(\mathbb{R}^d)} + N\|v^{y}(t,\cdot)\|_{\Lambda^2(\mathbb{R}^d)} 
    \\
    \leq& N A_{\gamma}(R) \|v^{y}(t,\cdot)\|_{\Lambda^{\gamma+2}(\mathbb{R}^d)} + N(d,\nu,\gamma,R)\|u(t,\cdot)\|_{L_{\infty}(\mathbb{R}^d)}.
\end{aligned}
\end{equation}
Due to \eqref{25.10.22.17.21}, for $n\in\mathbb{N}\cup\{0\}$,
\begin{equation}
\label{25.11.13.11.24}
\|D_{x}^n\zeta_R^y\|_{\Lambda^{\gamma}(\mathbb{R}^d)}=R^{-n}\|D_{x}^n\zeta\|_{L_{\infty}(\mathbb{R}^d)}+R^{-n-\gamma}\|D_{x}^n\zeta\|_{\dot{\Lambda}^{\gamma}(\mathbb{R}^d)}.
\end{equation}
For $G$, Lemma \ref{25.09.26.12.44} and \eqref{25.11.13.11.24} imply
\begin{equation}
\label{eq9151853}
\begin{aligned}
     \|G(t,\cdot)\|_{\Lambda^{\gamma}(\mathbb{R}^d)} \leq& N\|\zeta_R^y \|_{\Lambda^{\gamma}(\mathbb{R}^d)}\|f(t,\cdot)\|_{\Lambda^{\gamma}(\mathbb{R}^d)} \\
     &+ N(\|D_x\zeta_R^y \|_{\Lambda^{\gamma}(\mathbb{R}^d)}+\|D_{xx}\zeta_R^y \|_{\Lambda^{\gamma}(\mathbb{R}^d)}) \|u(t,\cdot)\|_{\Lambda^{\gamma+1}(\mathbb{R}^d)} \\
     \leq& N(R)\|f(t,\cdot)\|_{\Lambda^{\gamma}(\mathbb{R}^d)} + N(R)\|u(t,\cdot)\|_{\Lambda^{\gamma+1}(\mathbb{R}^d)},
\end{aligned}
\end{equation}
where $N(R)$ is independent of $y$.
Combining \eqref{eq9151852}, \eqref{eq9151853}, and Lemma \ref{25.10.25.20.19} gives
\begin{equation}
\label{25.12.27.12.11}
\begin{aligned}
    &\|v^{y}(t,\cdot)\|_{\Lambda^{\gamma+2}(\mathbb{R}^d)}\\
    \leq& N\cM_t\left(\|F(*,\cdot)\|_{\Lambda^{\gamma}(\mathbb{R}^d)} 1_{(0,T)}(*)\right)(t)
    \\
    \leq& N \cM_t\left(\|(a_{ij}-a_{ij}(*,y))D_{ij}v^{y}(*,\cdot)\|_{\Lambda^{\gamma}(\mathbb{R}^d)} 1_{(0,T)}(*)\right)(t)
    \\
    &+ N \cM_t\left(\|G(*,\cdot)\|_{\Lambda^{\gamma}(\mathbb{R}^d)} 1_{(0,T)}(*)\right)(t)
    \\
    \leq& NA_{\gamma}(R) \cM_t\left(\|v^y(\ast,\cdot)\|_{\Lambda^{\gamma+2}(\mathbb{R}^d)} 1_{(0,T)}(*)\right)(t)\\
    &+N(R)\cM_t\left((\|u(\ast,\cdot)\|_{\Lambda^{\gamma+1}(\mathbb{R}^d)}+\|f(\ast,\cdot)\|_{\Lambda^{\gamma}(\mathbb{R}^d)}+\|u(\ast,\cdot)\|_{L_{\infty}(\mathbb{R}^d)})1_{(0,T)}(\ast)\right)(t).
\end{aligned}
\end{equation}
Since the constants in \eqref{25.12.27.12.11} are independent of $y$, we have
\begin{align*}
&\sup_{y\in\mathbb{R}^d}\|v^{y}(t,\cdot)\|_{\Lambda^{\gamma+2}(\mathbb{R}^d)}\\
\leq& NA_{\gamma}(R)\mathcal{M}_t\left(\sup_{y\in\mathbb{R}^d}\|v^y(\ast,\cdot)\|_{\Lambda^{\gamma+2}(\mathbb{R}^d)} 1_{(0,T)}(*)\right)(t)\\
&+N(R)\cM_t\left((\|u(\ast,\cdot)\|_{\Lambda^{\gamma+1}(\mathbb{R}^d)}+\|f(\ast,\cdot)\|_{\Lambda^{\gamma}(\mathbb{R}^d)}+\|u(\ast,\cdot)\|_{L_{\infty}(\mathbb{R}^d)})1_{(0,T)}(\ast)\right)(t)
\end{align*}
By the Hardy-Littlewood maximal function theorem, when $p\in(1,\infty)$,
\begin{align*}
    &\int_{0}^T \sup_{y\in\mathbb{R}^d} \|v^{y}(t,\cdot)\|_{\Lambda^{\gamma+2}(\mathbb{R}^d)}^p w(t)\, \mathrm{d}t 
    \\
    \leq& NA_\gamma(R) \int_{0}^T \sup_{y\in\mathbb{R}^d} \|v^{y}(t,\cdot)\|_{\Lambda^{\gamma+2}(\mathbb{R}^d)}^p w(t)\, \mathrm{d}t
    \\
    &+ N(R)\bigg(\|u\|_{\mathbf{\Lambda}_{p,w}^{\gamma+1}(T)}+\|f\|_{\mathbf{\Lambda}_{p,w}^{\gamma}(T)}+\|u\|_{L_p((0,T),w\,\mathrm{d}t;L_{\infty}(\mathbb{R}^d))}\bigg).
\end{align*}
By taking a sufficiently small $R=R_0\in(0,1)$ such that $NA_{\gamma}(R_0)<1$ ($N$ is independent of $R$), we have
\begin{equation}
\label{eq9152333}
\begin{aligned}
&\int_{0}^T\sup_{y\in\mathbb{R}^d}\|v^y(t,\cdot)\|_{{\Lambda}^{\gamma+2}(\mathbb{R}^d)}w(t)\,\mathrm{d}t\\
\leq& N(R_0)\left(\|u\|_{\mathbf{\Lambda}_{p,w}^{\gamma+1}(T)}+\|f\|_{\mathbf{\Lambda}_{p,w}^{\gamma}(T)}+\|u\|_{L_p((0,T),w\,\mathrm{d}t;L_{\infty}(\mathbb{R}^d))}\right).
\end{aligned}
\end{equation}
Since by Lemma \ref{25.10.25.19.22},
\begin{align*}
    \|u\|_{\mathbf{\Lambda}_{p,w}^{\gamma+2}(T)}^p =& \int_{0}^T \|u(t,\cdot)\|_{\Lambda^{\gamma+2}(\mathbb{R}^d)}^p w(t) \,\mathrm{d}t \\
    \leq& N(R_0) \int_{0}^T \sup_{y\in\mathbb{R}^d} \|v^{y}(t,\cdot)\|_{\Lambda^{\gamma+2}(\mathbb{R}^d)}^p w(t)\, \mathrm{d}t
\end{align*}
one can deduce that
\begin{align} \label{25.12.28.21.04}
        \|u\|_{\mathbf{\Lambda}_{p,w}^{\gamma+2}(T)}\leq
        N(R_0) \left(\|u\|_{\mathbf{\Lambda}_{p,w}^{\gamma+1}(T)} +  \|f\|_{\mathbf{\Lambda}_{p,w}^{\gamma}(T)} + \|u\|_{L_{p}((0,T),w\,\mathrm{d}t;L_{\infty}(\mathbb{R}^d))} \right).
    \end{align}
when $p\in(1,\infty)$.
Similarly, when $p=\infty$, by using
\begin{align*}
    \|u\|_{\mathbf{\Lambda}_{p,w}^{\gamma+2}(T)} = \sup_{t\in(0,T)} \|u(t,\cdot)\|_{\Lambda^{\gamma+2}(\mathbb{R}^d)}  \leq N(R_0) \sup_{t\in(0,T)} \sup_{y\in\mathbb{R}^d} \|v^{y}(t,\cdot)\|_{\Lambda^{\gamma+2}(\mathbb{R}^d)},
\end{align*}
one can still obtain \eqref{25.12.28.21.04} with $p=\infty$ (and $w\equiv1$).
    Using \eqref{25.11.13.11.35} and taking a sufficiently small $\varepsilon\in(0,1)$ such that $N(R_0)\varepsilon<1$ yields the result for $\gamma\in(0,1]$.

The general case $\gamma\in(n-1,n]$ follows by induction using Proposition \ref{25.10.20.14.01}-(ii).
Assume that \eqref{25.10.30.10.17} holds when $\tilde{\gamma}\in (0,n-1]$ with $n\geq2$. 
Proposition \ref{25.10.20.14.01}-(ii) yields $\pi_{k}:=D_{k}u\in \mathbf{H}_{p,w}^{\gamma+1}(T)$ and $\pi_k$ satisfies
\begin{align*}
    \partial_t\pi_{k} &= \sum_{i,j=1}^{d}a_{ij}D_{ij}\pi_{k} + D_k f + \sum_{i,j=1}^{d}D_ka_{ij}D_{ij}u.
\end{align*}
By the induction hypothesis with $\gamma-1\in(0,n-1]$, Proposition \ref{25.10.20.14.01}-(ii) and Lemma \ref{25.09.26.12.44},
\begin{align*}
    \|\pi_{k}\|_{\mathbf{\Lambda}_{p,w}^{\gamma+1}(T)} &\leq N \|\pi_{k}\|_{L_p((0,T),w\,\mathrm{d}t;L_{\infty}(\mathbb{R}^d))} + N\|D_k f\|_{\mathbf{\Lambda}_{p,w}^{\gamma-1}(T)}
    + N \sum_{i,j=1}^{d}\|D_ka_{ij}D_{ij}u\|_{\mathbf{\Lambda}_{p,w}^{\gamma-1}(T)}
\\
&\leq N\left(\|u\|_{\mathbf{\Lambda}_{p,w}^{\gamma+1}(T)} + N \|f\|_{\mathbf{\Lambda}_{p,w}^{\gamma}(T)}\right).
\end{align*}
Since \eqref{25.10.30.10.17} is true when $\gamma-1\in(0,n-1]$,
\begin{equation*}
    \|u\|_{\mathbf{\Lambda}_{p,w}^{\gamma+1}(T)} \leq N  \left(  \|f\|_{\mathbf{\Lambda}_{p,w}^{\gamma-1}(T)} + \|u\|_{L_p((0,T),w\,\mathrm{d}t;L_{\infty}(\mathbb{R}^d))}\right).
\end{equation*}
Thus, we arrive at \eqref{25.10.30.10.17} with $\gamma\in(0,\infty)$.

\medskip

\textbf{Step 2.} We now show
\begin{equation*}
    \|u\|_{L_p((0,T),w\,\mathrm{d}t;L_{\infty}(\mathbb{R}^d))} \leq N \|f\|_{\mathbf{\Lambda}_{p,w}^{\gamma}(T)},
\end{equation*}    
where $N=N(d,\nu,K,p,K_0,T)$.
Let $s_l := \frac{lT}{m}$ for $l=0,\dots,m$, and take cut-off functions $\eta_l\in C^\infty(\mathbb{R})$ ($l\geq1$) such that $\eta_l=1$ for $t>s_l$, $\eta_l=0$ for $t\leq s_{l-1}$, and $|\eta_l'|\leq 2m/T$. 
     For $l=0$, set $s_{-1}:=0$ and $\eta_0\equiv 1$.
     Then $q^l(t,x):=u(t,x)\eta_l(t) \in \mathbf{\Lambda}_{p,w}^{\gamma+2}(s_{l-1},s_{l+1})$ satisfies
    \begin{equation*}
        \partial_tq^l = \sum_{i,j=1}^{d}a_{ij}D_{ij} q^l  + \sum_{i=1}^{d}b_i D_iq^l + cq^l + f\eta_l +u\partial_t \eta_l, \quad t\in(s_{l-1},s_{l+1}),
    \end{equation*}
    with the zero initial condition $q^l(s_{l-1},\cdot)=0$. 
    By \eqref{25.10.30.10.17} from Step 1, for $l\ge1$,
    \begin{align*}
        &\|\partial_tq^l\|_{\mathbf{\Lambda}_{p,w}^{\gamma}(s_{l-1},s_{l+1})}+\|q^l\|_{\mathbf{\Lambda}_{p,w}^{\gamma+2}(s_{l-1},s_{l+1})} \\
        \leq& N\|f\eta_l\|_{\mathbf{\Lambda}_{p,w}^{\gamma}(s_{l-1},s_{l+1})} + N\|u\partial_t\eta_l\|_{\mathbf{\Lambda}_{p,w}^{\gamma}(s_{l-1},s_{l+1})}+ N\|q^l\|_{L_p((s_{l-1},s_{l+1}),w\,\mathrm{d}t;L_{\infty}(\mathbb{R}^d))}
        \\
        \leq& N\|f\|_{\mathbf{\Lambda}_{p,w}^{\gamma}(T)} + N\frac{m}{T} \|u\|_{\mathbf{\Lambda}_{p,w}^{\gamma}(s_{l})}+ N\|u\|_{L_p((s_{l-1},s_{l}),w\,\mathrm{d}t;L_{\infty}(\mathbb{R}^d))}\\
        &+ N\|u\|_{L_p((s_{l},s_{l+1}),w\,\mathrm{d}t;L_{\infty}(\mathbb{R}^d))}.
    \end{align*}
    For $l=0$, since $\partial_t\eta_0=0$,
     \begin{align*}
        \|u\|_{\mathbf{\Lambda}_{p,w}^{\gamma+2}(s_1)} &\leq N\|f\|_{\mathbf{\Lambda}_{p,w}^{\gamma}(T)}+ N\|u\|_{L_{p}((0,s_1),w\,\mathrm{d}t;L_{\infty}(\mathbb{R}^d))}.
    \end{align*}
    Here, we remark that, although the analysis is carried out on smaller time intervals, the constants $N$ can be regarded as depending on $T$.
    Now apply Lemma \ref{25.10.25.21.37} and Step 1 to get, for $l\ge1$,
    \begin{align*}
        \|u\|_{L_{p}((s_l,s_{l+1}), w\,\mathrm{d}t;L_{\infty}(\mathbb{R}^d))} \leq& \|q^l\|_{L_{p}((s_{l-1},s_{l+1}), w\,\mathrm{d}t;L_{\infty}(\mathbb{R}^d))} \\
        \leq& N\frac{T}{m}\|\partial_t q^l\|_{L_{p}((s_{l-1},s_{l+1}), w\,\mathrm{d}t;L_{\infty}(\mathbb{R}^d))}
        \\
        \leq& N\frac{T}{m}\|f\|_{\mathbf{\Lambda}_{p,w}^{\gamma}(T)} + N\frac{T}{m}\|u\|_{L_p((s_{l-1},s_{l}),w\,\mathrm{d}t;L_{\infty}(\mathbb{R}^d))}
        \\
        & + N\frac{T}{m}\|u\|_{L_p((s_{l},s_{l+1}),w\,\mathrm{d}t;L_{\infty}(\mathbb{R}^d))}+N\|u\|_{\mathbf{\Lambda}_{p,w}^{\gamma}(s_{l})},
    \end{align*}
    and for $l=0$,
    \begin{align*}
        \|u\|_{L_p((0,s_1),w\,\mathrm{d}t;L_{\infty}(\mathbb{R}^d))} &\leq N\frac{T}{m}\left(\|f\|_{\mathbf{\Lambda}_{p,w}^{\gamma}(T)} + \|u\|_{L_p((0,s_1),w\,\mathrm{d}t;L_{\infty}(\mathbb{R}^d))} \right).
    \end{align*}
    Choose $m\in\mathbb N$ sufficiently large so that $N\,T/m<1$.
    Then, for every $l=1,\dots,m-1$,
    \begin{align*}
        \|u\|_{L_{p}((s_l,s_{l+1}),w\,\mathrm{d}t;L_{\infty}(\mathbb{R}^d))}\leq& N\left(\|f\|_{\mathbf{\Lambda}_{p,w}^{\gamma}(T)} + \|u\|_{\mathbf{\Lambda}_{p,w}^{\gamma}(s_{l})}\right)\\
        \leq& N\left(\|f\|_{\mathbf{\Lambda}_{p,w}^{\gamma}(T)} + \|u\|_{L_{p}((0,s_l),w\,\mathrm{d}t;L_{\infty}(\mathbb{R}^d))}\right).
    \end{align*}
    For $l=0$, the same argument yields
    \begin{equation*}
        \|u\|_{L_{p}((0,s_{1}),w\,\mathrm{d}t;L_{\infty}(\mathbb{R}^d))} \leq N\|f\|_{\mathbf{\Lambda}_{p,w}^{\gamma}(T)}.
    \end{equation*}
    Hence, by induction, the desired result is obtained.
Combining Steps 1 and 2, we complete the proof of the lemma.
\end{proof}

\medskip

\begin{proof}[Proof of Theorem \ref{25.10.25.16.21}]
    The uniqueness and the estimate follow directly from Lemma \ref{25.10.30.10.49}.

    For the existence, we apply the method of continuity.
    For $\lambda\in[0,1]$, define
    $$
    \mathcal{L}^{\lambda}:=\sum_{i,j=1}^{d}(\lambda a_{ij}+(1-\lambda)\delta_{ij})D_{ij}+\lambda \sum_{i=1}^{d}b_i D_i+\lambda c.
    $$
    Let $\mathrm{S}$ denote the set of all $\lambda \in [0,1]$ such that for every $f \in \mathbf{\Lambda}_{p,w}^{\gamma}(T)$, there exists a unique $u \in \mathbf{\Lambda}_{p,w}^{\gamma+2}(T)$ satisfying
    \begin{equation}
    \label{25.11.01.12.50}
\partial_t u = \mathcal{L}^{\lambda} u + f
\quad \text{with the zero initial condition.}
    \end{equation}

    We claim that $\mathrm{S} = [0,1]$.

    \textbf{Step 1. $\mathrm{S}$ is nonempty.}
    By known results (see, \textit{e.g.}, \cite[Theorem 1.6]{C24}), the equation corresponding to $\lambda = 0$ is solvable; hence $0 \in \mathrm{S}$.
    
\textbf{Step 2. Uniform openness of $\mathrm{S}$.}
Assumption~\ref{25.10.12.23.59}$(\gamma)$ holds for $\mathcal{L}^{\lambda}$ for
all $\lambda\in[0,1]$ with the same ellipticity constant $\nu$ and the same bound $K$ on the coefficients. 
Thus Lemma \ref{25.10.30.10.49} yields the a priori
estimate
\begin{equation}\label{25.12.11.15.57}
    \|u\|_{\mathbf{H}_{p,w}^{\gamma+2}(T)}
    \leq
    N\|f\|_{\mathbf{\Lambda}_{p,w}^{\gamma}(T)}
\end{equation}
for every solution $u$ of \eqref{25.11.01.12.50}, with a constant
$N = N(d,\gamma,K_0,K,\nu,p,T)$ that is independent of $\lambda$.

Fix $\lambda_0\in\mathrm{S}$, and let
$$
\mathcal{T}_{\lambda_0} : f \mapsto u
$$
denote the solution operator associated with \eqref{25.11.01.12.50} at
$\lambda=\lambda_0$. 
Then \eqref{25.12.11.15.57} implies
\begin{equation}
\label{25.12.11.16.01}
\|\mathcal{T}_{\lambda_0} f\|_{\mathbf{H}_{p,w}^{\gamma+2}(T)}\leq N\,\|f\|_{\mathbf{\Lambda}_{p,w}^{\gamma}(T)},
\quad
\forall f\in\mathbf{\Lambda}_{p,w}^{\gamma}(T),
\end{equation}
so that $\mathcal{T}_{\lambda_0}$ is bounded with the operator norm at most $N$.
Now fix $f\in\mathbf{\Lambda}_{p,w}^{\gamma}(T)$ and
$\lambda\in[0,1]$. 
We rewrite \eqref{25.11.01.12.50} as a fixed point problem
around $\lambda_0$:
$$
\partial_t u= \mathcal{L}^{\lambda_0}u+ f + (\mathcal{L}^\lambda - \mathcal{L}^{\lambda_0})u.
$$
Equivalently, $u = \mathcal{T}_{\lambda_0}
    \big(f + (\mathcal{L}^\lambda - \mathcal{L}^{\lambda_0})u\big)$.
Define the map
$$
\Phi_{\lambda_0,\lambda}(u)
    := \mathcal{T}_{\lambda_0}
       \big(f + (\mathcal{L}^\lambda - \mathcal{L}^{\lambda_0})u\big),
    \qquad
    u\in\mathbf{H}_{p,w}^{\gamma+2}(T).
$$
Then fixed points of $\Phi_{\lambda_0,\lambda}$ are precisely the solutions of
\eqref{25.11.01.12.50} for the parameter $\lambda$.
For $u_1,u_2\in\mathbf{H}_{p,w}^{\gamma+2}(T)$ we have, by the linearity of
$\mathcal{T}_{\lambda_0}$ and \eqref{25.12.11.16.01},
\begin{equation}
\label{25.12.11.16.04}
\|\Phi_{\lambda_0,\lambda}(u_1) - \Phi_{\lambda_0,\lambda}(u_2)\|_{\mathbf{H}_{p,w}^{\gamma+2}(T)}\leq N\|(\mathcal{L}^\lambda - \mathcal{L}^{\lambda_0})(u_1-u_2)\|_{\mathbf{\Lambda}_{p,w}^{\gamma}(T)}.
\end{equation}
Since the coefficients of $\mathcal{L}^\lambda$ depend linearly on $\lambda$ and are uniformly bounded in $\lambda\in[0,1]$, there exists a constant
$C_0=C_0(d,\gamma,K,K_0,\nu,p,T)>0$ such that
\begin{equation}
\label{25.12.11.16.05}
\|(\mathcal{L}^\lambda - \mathcal{L}^{\lambda_0})v\|
    _{\mathbf{\Lambda}_{p,w}^{\gamma}(T)}
\leq
C_0\,|\lambda-\lambda_0|\,
\|v\|_{\mathbf{H}_{p,w}^{\gamma+2}(T)},
\quad
\forall v\in\mathbf{H}_{p,w}^{\gamma+2}(T),
\ \lambda,\lambda_0\in[0,1].
\end{equation}
Combining \eqref{25.12.11.16.04} and \eqref{25.12.11.16.05} gives
$$
\|\Phi_{\lambda_0,\lambda}(u_1) - \Phi_{\lambda_0,\lambda}(u_2)\|_{\mathbf{H}_{p,w}^{\gamma+2}(T)}
\leq N C_0|\lambda-\lambda_0|\|u_1-u_2\|_{\mathbf{H}_{p,w}^{\gamma+2}(T)}.
$$
Choose $\varepsilon_0>0$ such that $N C_0\,\varepsilon_0 < 1$,
where $\varepsilon_0=\varepsilon_0(d,\gamma,K_0,K,\nu,p,T)$ is independent of $\lambda_0$.  
Then, for all $\lambda\in[0,1]$ with $|\lambda-\lambda_0|<\varepsilon_0$, the map $\Phi_{\lambda_0,\lambda}$ is a contraction on $\mathbf{H}_{p,w}^{\gamma+2}(T)$. 
By the Banach fixed-point theorem, $\Phi_{\lambda_0,\lambda}$ admits a unique fixed point $u\in\mathbf{H}_{p,w}^{\gamma+2}(T)$, which satisfies \eqref{25.11.01.12.50}.
Hence,
$$
(\lambda_0-\varepsilon_0,\lambda_0+\varepsilon_0)\cap[0,1]\subset \mathrm{S}.
$$
Thus every point of $\mathrm{S}$ has a neighborhood of radius $\varepsilon_0$
contained in $\mathrm{S}$, with $\varepsilon_0$ independent of the point.

\medskip

\noindent
\textbf{Step 3. Covering argument.}
Since $0\in\mathrm{S}$ and $\varepsilon_0>0$ is fixed, we have
$[0,\varepsilon_0]\cap[0,1]\subset\mathrm{S}$.  
Iterating this argument, we obtain
$$
[0,k\varepsilon_0]\cap[0,1]\subset\mathrm{S}
\quad\text{for each }k\in\mathbb{N}.
$$
Choosing $k_0\in\mathbb{N}$ with $k_0\varepsilon_0\ge1$ yields
$[0,1]\subset\mathrm{S}$, and hence $\mathrm{S}=[0,1]$.

This proves the existence of a unique solution
$u\in\mathbf{H}_{p,w}^{\gamma+2}(T)$ to \eqref{25.11.01.12.50} for $\lambda=1$.
Together with the a priori estimate and uniqueness, this completes the proof. 
\end{proof}

\subsection{Proof of Theorem \ref{25.10.20.19.28} with non-zero initial data}

We now turn to the case of non-zero initial data.  
The following trace theorem plays a crucial role in our analysis.  
It shows that $\mathbf{H}_{p,w}^{\gamma+2}(T)$ admits a well-defined and  continuous trace at $t=0$, and moreover that every element of the space $\Lambda_{p}^{\gamma+2,w}(\mathbb{R}^d)$ arises as such a trace.  
In particular, this identifies $\Lambda_{p}^{\gamma+2,w}(\mathbb{R}^d)$ as the \emph{optimal} initial data space for the regularity class $\mathbf{H}_{p,w}^{\gamma+2}(T)$, and provides the exact mechanism by which the non-zero initial value problem can be reduced to the zero initial data case.

\begin{thm}[Trace theorem]
\label{25.11.01.13.31}
    Let $\eta\in(0,\infty)$, $p\in(1,\infty]$, $T>0$, and $w\in A_p$ with $[w]_{A_p}\leq K_0$.
    \begin{enumerate}[(i)]
        \item The space $\mathbf{H}_{p,w}^{\eta+2}(T)$ is continuously embedded into $C([0,T];\Lambda_{p}^{\eta+2,w}(\mathbb{R}^d))$; that is,
        $$
        \sup_{t\in[0,T]}\|u(t,\cdot)\|_{\Lambda_{p}^{\eta+2,w}(\mathbb{R}^d)}\leq N\|u\|_{\mathbf{H}_{p,w}^{\eta+2}(T)}.
        $$
        \item For every $u_0 \in \Lambda_{p}^{\eta+2,w}(\mathbb{R}^d)$, there exists $v \in \mathbf{H}_{p,w}^{\eta+2}(T)$ satisfying $v(0,\cdot) = u_0$, and
        \begin{equation}
        \label{25.11.01.13.33}
        \|v\|_{\mathbf{H}_{p,w}^{\eta+2}(T)}\leq N\|u_0\|_{\Lambda_{p}^{\eta+2,w}(\mathbb{R}^d)}.
        \end{equation}
    \end{enumerate}
    Here $N$ depends on $K_0,p$, and $T$.
\end{thm}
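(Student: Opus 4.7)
The plan is to reduce both parts to semigroup estimates for the heat kernel $\{e^{t\Delta}\}_{t\ge0}$ acting on the weighted Zygmund spaces. The key structural fact I would exploit is that the spatial weight $W(|h|^2)$ in the definition of $\Lambda_p^{\eta+2,w}(\mathbb{R}^d)$ is matched---via the parabolic scaling $|h|^2\sim t$ and the doubling properties of $w\in A_p$---to the temporal weight $w(t)$; this matching is precisely what allows the heat semigroup to serve as the natural bridge between the initial data space $\Lambda_p^{\eta+2,w}(\mathbb{R}^d)$ and the trajectory space $\mathbf{H}_{p,w}^{\eta+2}(T)$.

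For part $(ii)$, I would take the heat extension $v(t,x):=(e^{t\Delta}u_0)(x)$, so that $v(0,\cdot)=u_0$ and $\partial_t v=\Delta v$. Verifying $v\in\mathbf{H}_{p,w}^{\eta+2}(T)$ with the bound \eqref{25.11.01.13.33} reduces to a semigroup characterization of the trace norm, namely
$$
\int_0^T\Bigl(\|e^{t\Delta}u_0\|_{\Lambda^{\eta+2}(\mathbb{R}^d)}^p+\|\Delta e^{t\Delta}u_0\|_{\Lambda^{\eta}(\mathbb{R}^d)}^p\Bigr)w(t)\,dt\le N\|u_0\|_{\Lambda_p^{\eta+2,w}(\mathbb{R}^d)}^p.
$$
I would establish this equivalence using the Littlewood-Paley characterization of $\Lambda^\gamma$ from Proposition \ref{25.10.20.14.01}, the pointwise heat kernel bound $\|e^{t\Delta}\Delta_j u_0\|_{L_\infty}\lesssim e^{-ct2^{2j}}\|\Delta_ju_0\|_{L_\infty}$, and the change of variables $s=|h|^2$, which converts the spatial measure $W(|h|^2)|h|^{-d}\,dh$ into a temporal measure equivalent to $w(s)\,ds$ (with constants depending only on $[w]_{A_p}$).

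For part $(i)$, given $u\in\mathbf{H}_{p,w}^{\eta+2}(T)$ with $\partial_t u=f$ and $u(0,\cdot)=u_0$, I would use the Duhamel identity
$$
u(t,\cdot)=e^{t\Delta}u_0+\int_0^t e^{(t-s)\Delta}\bigl(f(s,\cdot)-\Delta u(s,\cdot)\bigr)\,ds,
$$
in which the first term is controlled in $\Lambda_p^{\eta+2,w}$ by the semigroup characterization of part $(ii)$, while the second term is bounded by the parabolic Duhamel trace estimate
$$
\Bigl\|\int_0^t e^{(t-s)\Delta}F(s,\cdot)\,ds\Bigr\|_{\Lambda_p^{\eta+2,w}(\mathbb{R}^d)}\le N\|F\|_{\mathbf{\Lambda}_{p,w}^{\eta}(T)},
$$
applied with $F=f-\Delta u\in\mathbf{\Lambda}_{p,w}^{\eta}(T)$ (note that $\|\Delta u\|_{\mathbf{\Lambda}_{p,w}^{\eta}(T)}\lesssim\|u\|_{\mathbf{\Lambda}_{p,w}^{\eta+2}(T)}$). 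Continuity $t\mapsto u(t,\cdot)$ into $\Lambda_p^{\eta+2,w}$ will follow from the same estimate applied on $[t_0,t]$ together with dominated convergence. The main obstacle lies in the two weighted semigroup estimates---the $\Lambda_p^{\eta+2,w}$-boundedness of $e^{t\Delta}$ and the parabolic Duhamel trace---both of which require delicate control of the interaction between the heat kernel, the spatial weight $W(|h|^2)$, and the $A_p$ weight $w$. These estimates form the technical heart of the argument and are essentially the content of the trace theorem developed in \cite{C24}; since the statement concerns only the structure of the function spaces and not the specific equation, the present trace theorem can be imported from there with only notational modification.
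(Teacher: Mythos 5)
Your proposal is correct in outline but takes a genuinely different route from the paper. The paper's proof is much shorter and purely abstract: it first invokes \cite[Lemma~3.1]{C24} to identify the initial data space as a \emph{generalized real interpolation space}, $\Lambda_{p}^{\eta+2,w}(\mathbb{R}^d)=(\Lambda^{\eta+2}(\mathbb{R}^d),\Lambda^{\eta}(\mathbb{R}^d))_{W^{1/p},p}$ with $W(t)=\int_0^t w$, and then cites the abstract trace theorem \cite[Theorem~1.8]{C24} for part $(i)$ and the abstract lifting theorem \cite[Theorem~1.5]{CLSW25} for part $(ii)$. (It also disposes of $p=\infty$ at the outset, noting that there the result is immediate from the definition of $\mathbf{H}_{p,w}^{\eta+2}(T)$ — a case your semigroup sketch does not explicitly separate, although it is not a problem.) You instead propose an explicit constructive route: the heat extension $v=e^{t\Delta}u_0$ for the lifting direction, and the Duhamel decomposition $u=e^{t\Delta}u_0+\int_0^t e^{(t-s)\Delta}(f-\Delta u)\,ds$ for the embedding direction, with the interaction between $W(|h|^2)$ and $w(t)$ via parabolic scaling and $A_p$-doubling supplying the needed matching. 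This is the classical ``heat-semigroup description of real interpolation'' picture, which is morally what underlies the abstract interpolation characterization the paper cites; in that sense the two arguments are dual presentations of the same mechanism. What the paper's route buys is brevity and avoidance of re-deriving weighted semigroup estimates; what yours buys is transparency about why the trace space has the form it does, at the cost of having to establish (rather than cite) the two weighted semigroup bounds — and you correctly flag that this technical core is essentially the content of \cite{C24}, so your sketch ends up importing the same external result by a different door. No gap in logic, just a different organization of the dependency on the literature.
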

\begin{proof}
Since the results for the case $p=\infty$ follow directly from the definition of $\mathbf{H}_{p,w}^{\eta+2}(T)$, we focus on the case $p\in(1,\infty)$.
By \cite[Lemma 3.1]{C24} with $\phi(\lambda) = \lambda^{\eta}$ and $\gamma = 2$, we have
\begin{equation}
\label{25.12.30.11.15}
(\Lambda^{\eta+2}(\mathbb{R}^d), \Lambda^{\eta}(\mathbb{R}^d))_{W^{1/p},p}
= \Lambda_{p}^{\eta+2,w}(\mathbb{R}^d),
\end{equation}
where $W(t) := \int_0^t w(s)\,\mathrm{d}s$, and $(X,Y)_{W^{1/p},p}$ denotes the generalized real interpolation space (see \cite{CLSW25}).
With the characterization of the generalized real interpolation space \eqref{25.12.30.11.15}, $(i)$ and $(ii)$ follow directly from \cite[Theorem 1.8]{C24} and \cite[Theorem 1.5]{CLSW25}, respectively.
This completes the proof of the theorem.
\end{proof}

\medskip

\begin{proof}[Proof of Theorem \ref{25.10.20.19.28}]
    The uniqueness follows directly from Theorem \ref{25.10.25.16.21}.

    For the existence, we first obtain $v \in \mathbf{H}_{p,w}^{\gamma+2}(T)$ satisfying $v(0,\cdot) = u_0$ and \eqref{25.11.01.13.33} by applying Theorem \ref{25.11.01.13.31}-($ii$).  
Then $\mathcal{L}v - \partial_t v \in \mathbf{\Lambda}_{p,w}^{\gamma}(T)$, where
$$
\mathcal{L} u := \sum_{i,j=1}^{d}a_{ij} D_{ij} u + \sum_{i=1}^{d}b_i D_i u + c u.
$$
Hence, by Theorem \ref{25.10.25.16.21}, there exists a unique solution $\bar{u} \in \mathbf{H}_{p,w}^{\gamma+2}(T)$ to
\begin{equation*}
\partial_t \bar{u} = \mathcal{L}\bar{u} + (f+\mathcal{L}v- \partial_t v),
\quad \text{with the zero initial condition.}
\end{equation*}
Let $u := \bar{u} + v$.  
Then $u$ is a solution to
$$
\partial_t u = \mathcal{L} u + f,
\qquad
u(0,\cdot) = u_0.
$$
The desired estimates follow directly from Theorems \ref{25.10.25.16.21} and \ref{25.11.01.13.31}.  
This completes the proof of the theorem.
\end{proof}





\end{document}